\newtheorem{theorem}{Theorem}[section]
\newtheorem{lemma}[theorem]{Lemma}
\newtheorem{proposition}[theorem]{Proposition}
\theoremstyle{definition}
\newtheorem{definition}[theorem]{Definition}
\newtheorem{example}[theorem]{Example}
\theoremstyle{remark}
\newtheorem{remark}[theorem]{Remark}
\numberwithin{equation}{section}
\DeclareMathOperator{\im}{Im}
\begin{document}
\title{Subprincipal Controlled Quasimodes and Spectral Instability}

\author{Pelle Brooke Borgeke}
\address{Linn\oe us university}
\curraddr{}
\email{pelle.borgeke@lnu.se }

\subjclass[2010]{Primary}

\keywords{Semiclassical quasimodes, normal forms, subprincipal symbol, transport equations, pseudospectrum.}

\date {6 January 2026}

\begin{abstract} Here we explore, in a series of articles, semiclassical quasimodes $u(h,b),$ approximative solutions $P(h)u(h)\sim 0$, depending on $0< h \leq 1$, and on  $b$, the subprincipal symbol. We study a pseudodifferential operator $P(x,hD_x;h^n B_{n\geq 1})$, with transversal intersections of bicharacteristics, where the principal symbol has double multiplicity, $p=dp=0$, in $(x_0,\xi_0,)\in \Omega \subset T^*(X), X\subset \mathbb{R}^{2n}$. Because of this fact, we instead study the subprincipal symbol $B_1(x,\xi)=b=\alpha(x,\xi)+i\beta(x,\xi),$ and we can conclude that we get transport equations depending on $b$ where sign changes for $\beta$ give approximative solutions with small support. These modes are used to estimate spectral instability, or the \emph{pseudospectrum}. We also investigate the possibility that we can factorize the model operator as $P(h)=h^2D_1D_2+ hB(x,hD_x)=h^2P_1P_2,$ in this way actually annihilating the subprincipal symbol, thus there is no $\beta$-condition. In a follow-up article, we examine different cases for more complex operators with tangential intersections of bicharacteristics, thereby generalizing the findings here.
\end{abstract}

\maketitle
\section{Introduction}
 This paper, in a series of articles, is an investigation of the conditions for the existence of \emph{semiclassical quasimodes}, $u(h,b)$, functions that are approximative solutions to $P(h)u(h,b)=0$. They depend of the small parameter $h\in (0,1],$ and here also of the subprincipal symbol $b$ for our \emph{model operator},  $P(h)=h^2D_1D_2+ hB(x,hD_x)$, (symbol, $p_1p_2+ b, \ b\in S^1$), which is a simplified micro local stand-in for the pseudodifferential operator, with as usual $D_x=-i\partial_x$ \begin{equation} P(x,hD_x;h^nB_{n\geq 1}) = P(x,hD_x) + hB_1(x,hD_x)+h^2B_2(x,hD_x)\ldots\end{equation}
This operator has in the small neighborhood $\Omega, p=dp=0$,  double multiplicity for the principal symbol, so we instead get \emph{subprincipal controlled} quasimodes. If these quasimodes $u(h,b)$ are present, we can expect spectral instability or pseudospectrum.
The novelty in this study, besides studying these types of quasimodes in spectral analysis, is that we encounter several different conditions for the quasimodes to appear, or be non-existent, for example, by factorization to $P_1(x,hD_x)P_2(x,hD_x;hB(x,hD_x))$, thus in fact annihilating the subprincipal control. We explore different classes of operators and have also developed and refined the proof method initially developed in the PDE theory of local solvability.
In this, the first article, we examine an operator class with \emph{transversal} intersections of bicharacteristics. In a second article, we consider a more complex class of operators, also with double characteristics for the principal symbol $p$ but with \emph{tangential} intersections of bicharacteristics.
The pseudospectra norm estimate is provided in the \emph{protocol} below. \footnote {We use here the layout to put statements and conditions in a \emph{protocol} (rules and conditions) to have all in one place and the quantifiers, for all $(x),$ there exist $[x]$ and unique $[\dot x]$ first. The \textbf{.}, dots, is used, but only in an obvious way, to avoid too many parentheses. We were inspired by [11] here. For more details, see Appendix C.}
\begin{align} (u(h,b))[h][P(h)][\dot\zeta][\dot\xi_1][\dot\xi_2](|R(h,\zeta)|=  \frac{\Vert u(h,b) \Vert}{\Vert P(h)u(h,b) \Vert_{L_2}} \longrightarrow \infty;\\ (u(h,b))\in C^{\infty}, ||(u(h,b))||=1, P(x,\xi)=\xi_1\xi_2, p=dp=0 \land (\xi_1=0,\xi_2=0), \\ 0<h \leq 1, h \rightarrow 0, \zeta=0; \ b(x,\xi)=\alpha+i\beta).\end{align}
We shall here find and prove the \emph{conditions} for this subprincipal control so that we can add these for the free variable $b(x,\xi)$ above (variables in (1.2)-(1.4) are “free” if they are not in the quantifiers).
We look for approximate solutions using $P(h)$ \footnote{Here we reduce the resolvent $R(h,\zeta)$ to $R(h,0)=R(h)$. If we study $P(h)=(P(h)+\zeta)$ as an operator, adding a constant to $P(h)$ does not change the results by more than a translation. The $\zeta$ in $(P(h)u(h)-\zeta)^{-1}$ can thus be ignored (subtracted).} on the ansatz $v_h$ \begin{align} [P(h)][v_h][B(x,hD_x)](P(h)v_h=h^2D_1D_2v_h +h B_0(x,hD_x)v_h\sim 0 ;\\ B(x,hD_x)=A_1hD_1+A_2hD_2+R(x) \in \Psi^1 \land A_1,A_2,R(x) \in \Psi^0 ,\\ C^\infty \ni v_h(b)= e^{ig/h^{\alpha}}a(x,b); \ g=x_2\xi_2) \ . \end{align}
Here, “$\alpha$” is one of the parameters used in the exponent of $h$ to govern the system of transport equations. In the main theorem, we find that the condition to get the quasimodes in this setting is: The \emph{imaginary part} of the subprincipal \emph{symbol}, $\beta(x,\xi)$, must \emph{change sign} on a \emph{limit bicharacteristic}. \footnote{As we have $dp=0$, the solution to the Hamiltonian equations $(\partial_{\xi}p,-\partial_{x}p)$ will be zero for $\xi_1=\xi_2=0$ and the bicharacteristic there will just be a point. Because of this, we approach the point instead of a limit bicharacteristic. For more details, see Appendix A or [4].} We shall then say that the operator (1.1) has a \emph{transversal} subprincipal normal form $\beta$-condition in $\Omega.$ In Section 4, we study an operator with the same conditions, but we cannot construct the quasimodes due to the factorization of the operator.
The material is split as follows: In Section 2, we introduce the proof method and discuss several points. In Section 3, we prove the theorem. Section 4 is the example concerning factorization, mentioned above. After that, we have put appendices to make the article self-contained. There, you first find some preliminaries and the necessary redefinitions of the geometry to fit our study of spectral problems, followed by a second appendix (B) with a short introduction to pseudospectrum, and in (C), comments on the notation and terminology used in this series of articles.
\section{The proof method}
In the proof, we use a system of transport equations which was used by H\"ormander [6] in his study of the question of solvability for linear differential operators, and also adopted by Dencker, f.x. in [4] and Mendoza and Uhlmann in [8], there also investigating operators with double multiplicity, and others who work with similar problems.
Here, we use parts of this framework in another area, semiclassical analysis to find quasimodes, so that $Pu(h) \sim 0$ can be connected to the question of pseudospectrum. We adjust and simplify the proof method to ensure it works smoothly with all the parameters and details needed here. Admittedly, it will be rather technical anyway, but we shall already discuss some of the details in the proof. We also prove several lemmas that will simplify the next section by dividing the proof into smaller pieces. In this way, we get a shorter, simpler main proof that extends to just over three pages.
We shall use a semiclassical version of the usual asymptotic expansion from Duistermaat [5] (conjugated by $e^{-i g/h^{\alpha}}$) to calculate the model operator's action on the ansatz $v_h(b)=e^{i g/h^{\alpha}} a(x)$
\begin{align}e^{-i g/h^{\alpha}} (P(h)e^{i g/h^{\alpha}} a )(x) = p(x, h\partial_x  g/h^{\alpha}) \\ +  h b(x,h^{1-\alpha} \partial_x  g)a  + h \sum_j \frac{\partial}{\partial \xi_j}p(x,h^{1-\alpha} \partial_x  g)D_j a \\ + \frac{1}{2} h^2 \sum_{ij} \frac{\partial^2 }{\partial \xi_i \partial \xi_j}p(x, h^{1-\alpha}\partial_x  g)D_iD_ja + \mathcal{O}(h^3).
\end{align} For our operator class, as the principal symbol is just $p=\xi_1 \xi_2,$ the $\emph{eikonal}$ \footnote{The word eikonal comes from the Greek word $\epsilon \iota \kappa \omega \nu$ which means image or just icon.} equation, $ p(x,h^{1-\alpha}\partial_x g)a=0$, if we take $g(x)$ dependent on just one of the $x$ variables so $g(x)=\xi_2 x_2$. As a result, the subprincipal symbol gets into the first transport equation.
We work with principal symbols, $p$, that microlocally can be factored $p=p_1p_2$  in the neighborhood $\Omega$, which we study. The reduction to the normal form is taken from [1], where it is done with $h=1$. This is not a restriction because we can always rescale to this case via a change of variables, as shown on page 57 in [12]. \begin{equation} \tilde{x}:=h^{-1/2}x, \  \tilde{\xi}:=h^{-1/2}\xi, \ a_h(\tilde{x},\tilde{\xi}):=a(x, \xi).\end{equation} After that, we use semiclassical Fourier integral operators instead. We first introduce three parameters. \begin{lemma} The parameters $\alpha, \beta$ and $\gamma$ in our system of transport equations form a partition of the unity of the semiclassical parameter $1<h \leq 1$ so that $h=h^{\alpha + \beta + \gamma}$. They are real numbers, used in the exponent of the semiclassical variable to balance the system of equations and also adjust the subprincipal symbol, satisfying \begin{equation} [\alpha, \beta, \gamma](\alpha + \beta + \gamma = 1, 0 <\beta < \gamma \leq \alpha < 1;  \    \alpha, \beta, \gamma \in \mathbb{R}(0,1)). \end{equation} which is a plane in $\mathbb{R}^3$ with vertices in $(\alpha, \beta, \gamma)= (1,0,0), (0,1,0), (0,0,1)$.
\end{lemma}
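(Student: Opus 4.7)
The plan is to treat this lemma as essentially a consequence of two observations: (i) a logarithmic identity forcing $\alpha+\beta+\gamma=1$, and (ii) a bookkeeping argument on the expansion (2.1)--(2.3) that forces the ordering $0<\beta<\gamma\le\alpha<1$. I would therefore split the proof into three short steps, matching the three clauses in the statement.

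First, I would fix $h\in(0,1)$ (the case $h=1$ is trivial) and take logarithms in the required identity $h=h^{\alpha+\beta+\gamma}$. Since $\log h\neq 0$, dividing yields $\alpha+\beta+\gamma=1$, which is the partition-of-unity condition. This also forces each of $\alpha,\beta,\gamma$ to lie in $[0,1]$ as soon as one knows they are nonnegative, since three nonnegative reals summing to $1$ each lie in $[0,1]$.

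Second, I would extract the strict inequalities $0<\beta<\gamma\le\alpha<1$ from the role each exponent plays in the system arising from (2.1)--(2.3). Substituting the ansatz $v_h=e^{ig/h^\alpha}a(x,b)$ with $g=x_2\xi_2$ into $P(h)v_h$, the oscillatory factor produces powers $h^{1-\alpha}$ in the subprincipal term and $h^{2-2\alpha}$ (or similar) in the second-order term; the additional exponents $h^\beta$ and $h^\gamma$ enter as rescalings used respectively to adjust $b=\alpha(x,\xi)+i\beta(x,\xi)$ and to balance the transport hierarchy. For the subprincipal symbol to genuinely participate, one needs $\beta>0$; for the transport equations to be nontrivially coupled (rather than degenerating to the eikonal alone), one needs $\gamma>\beta$; and for the phase to actually produce oscillation, one needs $\alpha\ge\gamma$ and $\alpha<1$. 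These bookkeeping constraints, read off from the orders of $h$ in the expansion, give exactly the required chain $0<\beta<\gamma\le\alpha<1$.

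Finally, the geometric description is immediate: the affine equation $\alpha+\beta+\gamma=1$ intersected with the closed cube $[0,1]^3$ is the standard $2$-simplex, which is a planar triangle in $\mathbb{R}^3$ with extreme points $(1,0,0)$, $(0,1,0)$, $(0,0,1)$. The admissible parameter region specified by the strict inequalities is an open subregion of this triangle, whose closure has these three points as vertices.

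The main obstacle, and the only real content, is step two: the inequalities $0<\beta<\gamma\le\alpha<1$ are not purely algebraic but are forced by the balance of powers of $h$ in the transport equations, so the argument has to refer to the expansion (2.1)--(2.3) and to the role played by each parameter in adjusting the subprincipal symbol and the oscillation. Everything else — the constraint $\alpha+\beta+\gamma=1$ and the simplex picture — is routine once that ordering is justified.
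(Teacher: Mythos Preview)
Your proposal is correct and covers the same ground, but the route differs from the paper's in two ways worth noting. For the identity $\alpha+\beta+\gamma=1$ you take logarithms of the given relation $h=h^{\alpha+\beta+\gamma}$; the paper instead argues constructively that semiclassical quantization contributes a factor $h$, differentiating the phase $g/h^{\alpha}$ contributes $h^{-\alpha}$, and the coordinate rescaling contributes $h^{-\gamma}$, so that the residual exponent $\beta$ is \emph{defined} to be $1-\alpha-\gamma$. The paper then parametrizes the resulting plane explicitly as $(\alpha,\beta,\gamma)=(1,0,0)+s(-1,0,1)+t(0,-1,1)$ and reads the ordering off from the sign of the parameters $s,t$, whereas you argue the inequalities directly from the role each exponent plays in balancing the expansion --- essentially the same heuristic, and arguably presented more cleanly on your side. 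What the paper's version buys is that it goes slightly beyond the lemma as stated and, in the equality case $\alpha=\gamma$, pins down concrete admissible values ($\beta=1/5$ or $1/7$) that are used in the later transport-equation lemmas; your simplex description at the end is precisely the geometric content of the paper's parametrization and is fully adequate for the lemma itself.
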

\begin{proof}
We can write $(\alpha, \beta, \gamma)= (1,0,0) + s(-1,0,1) + t(0,-1,1),$ which give
\begin{equation}\left\{  
\begin{array}{ll}
1.\ \alpha = 1  -s \\
2.\ \beta =    -t\\
3.\ \gamma =  s+ t \\
4.\ \alpha + \beta + \gamma=1
\end{array}\right.
\end{equation}
If we take $s,t$ as positive numbers, we find that ($ 0<\beta <\gamma \leq \alpha <1).$
The equation (4.) comes from semiclassical quantization in $h$ ($\xi\mapsto h\xi \mapsto hD_{x}$) and derivation of $g(x)=x_2h\xi_2/h^{\alpha}$ which gives $h^{1 -\alpha}$ in the exponent. A change of coordinates contributes with $\gamma$ for length coordinates and $-\gamma$ for derivation, and in that case we get $h^{1 -\alpha-\gamma}$. We define $\beta:=1 -\alpha-\gamma$, and this simplifies to $\beta=1-2\gamma$, if we use the equality in $\gamma \leq \alpha.$ This can be useful in some context; it specifies the parameters by $1=2\gamma+\beta$ and then we must choose $\gamma=2j_{=2|3}\beta$, which means $$(\beta=1/(2j_{=2|3}+1)) \Leftrightarrow (\beta=1/5 \   |      \ \beta=1/7).$$
\end{proof} The parameter “$\gamma$” changes coordinates (except for $x_1=t$ and $D_1$) so that $b(t,h^{\gamma} x,h^{\beta}\xi_2) \mapsto b(t,h^{\beta}\xi_2) +\mathcal{O}(h^{\gamma})$ in $\Sigma_2(z),$ the $t=x_1$ line where the sign change is supposed to be. The value of $\gamma$ must also be enough to control the remainder from getting unbounded. This means, and we take the usual association with $S^n$ as symbolspace and $\Psi^n$ as operatorspace, \begin{align}
(S^n(\mathbb{R}^n)\ni(x_{j>0},\xi_{j>0}) \mapsto (x_{j>0}, hD_{j>0})\in \Psi^n)\\ \land
(S^1\in \xi_{j>0} \mapsto h\xi_{j>0} \in \Psi^1).\end{align} but we make the change of coordinate in the operator space by $h^{\gamma}$ on $(x_{j>1}, \xi_{j>1}),$ \begin{equation} \Psi^n(\mathbb{R}^n)\ni (x_{j>1}, hD_{j>1})\mapsto(h^{\gamma}x_{j>1}, h^{1-\gamma}D_{j>1})\in \Psi^n(\mathbb{R}^n). \end{equation}
This technique of scaling or rescaling is standard in semiclassical analysis with $h$ as a trademark, and this letter is scaled in different ways for different reasons. We have already rescaled to $h=1$ to use classical results; for more on this, see [12].
The following WKB-form \begin{equation} (v_h(x)=e^{ig/h^{\alpha}}a(x)) \land (g(x) = x_2 \xi_2) \end{equation} is the ansatz $v_h$, which is acted on by the model operator.
We shall prove lemmas and give examples that connect to these parameters, results that we can also use in the following article. We will later see that we get, after using Duistermaat´s formula with the coordinates ($(t,x)\ | \ (x_1,x'))$ \begin{equation} h^{2-\alpha}\xi_2 D_1 a +hb(x_1,hx',h^{1-\alpha} \xi_2)a + h^{2}D_1D_2 a + \mathcal{O}(h^2) \end{equation} and the remainder term from the expansion $\mathcal{O}(h^2).$ The idea is to get an equation with $D_1a$ and $ba$, the first and second term, as the first transport equation, and solve it by integrating factor, leaving the other terms to the following equations. This will give us $a(x,b)$ as an exponential function depending on $b$. If we scale everywhere but $x_1$ and $D_1$, using Taylor in the second step for $x'$, with remainder $\mathcal{O}(h^{1+\gamma})$, and the definition of $\beta$, we find \begin{align}
h^{1+\beta}\xi_2 D_1 a +hb(x_1,h^{\beta} \xi_2)a + h^{2-\gamma} D_1D_2 a + \mathcal{O}(h^{1+\gamma}) + \mathcal{O}(h^{2-\gamma}). \end{align}
We can now factor out $h^{1+\beta}$, freeing $\xi_2 D_1$ for the first transport equation, without making the remainder terms unbounded, which now becomes $\mathcal{O}(h^{\gamma-\beta})$ and $h^{1+\beta}\mathcal{O}(h^{1-\gamma-\beta})$. \footnote{In this way, the parameters work as balancing factors, as e.g., in a chemical reaction in stoichiometry. This is achieved by scaling the number of involved molecules for example, if we have $X, Y, Z$ and $W$ with integers $k, l, m$ and $n$ we must choose the integers (parameters, $k,l,m$ and $n$) so that $k X + l Y  \longrightarrow mZ + nW$ gives a conservation of mass.} We shall simplify this further by using only $\beta$ as the unit parameter. This comes naturally as the quotient $\gamma/\beta>1$ is important here. We know that $\beta$ divides $\gamma,$ and if $\gamma=2\beta$ we obtain $\alpha=1-3\beta$. In all $1-3\beta+\beta +2\beta=1$. Also $\alpha/\beta=(1-3\beta)/\beta= 1/\beta-3=x-3$, where $x\in \mathbb{N}$ is the modulo factor. We shall later see that this is the minimum information the system needs.\footnote{Of course, we can use explicit numbers as $2/5+1/5+2/5$, which is the lowest explicit denominator that function, but eventually these numbers are all of the form $1-(j+1)\beta+\beta+j\beta=1, j=2,3.$}
For the general remainder term (also $\mathcal{O}_{\mathcal{G}})$ we get, when scaling every term but $D_1$, with $\gamma$ so $h^{\lambda}D_1^{\lambda}, h^{\mu-\mu \gamma}D_{x'}^{\mu}$ and $h^{\kappa-\kappa \gamma-\kappa \alpha} \xi_2^{\kappa}$ we find after simplifications, factorization and, using $\beta=1-\alpha-\gamma$ and $(| :=$ XOR) \begin{align} \mathcal{O}(h^{\kappa\beta+\lambda+ (1-\gamma)\mu}) \\ =h^{1+j_{=1|2}\beta}\mathcal{O}(h^{m + (\kappa-j_{=1|2}-(\gamma/\beta)\mu)\beta }) \\ =h^{1+1|2\beta}\mathcal{O}(h^{m + ((\kappa -1|2)-2|3\mu)\beta)} \end{align} with $m=\lambda+ \mu -1.$
We collect this result in a lemma that we use here and in the following article in this series.
\begin{lemma} For the remainder term for our two classes of operators, transversal and tangential intersections of bicharacteristics, we use in (2.16), the lowest numbers $1 \land 2$, for the first case, and $2 \land 3$, for the second case, with $m=\lambda+ \mu-1$. \begin{align} h^{1+1|2\beta}\mathcal{O}(h^{m + ((\kappa-1|2)-(2|3\mu))\beta).} \end{align} \end{lemma}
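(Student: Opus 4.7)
The plan is to verify, by direct bookkeeping, that substituting the two admissible choices of the ratio $\gamma/\beta$ from Lemma 2.1 into the general scaling derived in (2.14)--(2.16) produces the two instances packaged in the lemma. Since all of the algebra is already laid out in the paragraph preceding (2.16), the lemma is essentially an indexing statement that selects the \emph{lowest} admissible values of the modulo factor $x = 1/\beta$ compatible with the partition-of-unity constraint $\alpha+\beta+\gamma=1$ and the ordering $0<\beta<\gamma\leq\alpha<1$.

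First I would recall that a generic monomial contribution to the remainder $\mathcal{O}_{\mathcal{G}}$, after applying the change of coordinates (2.11) to every variable except $(x_1,D_1)$, comes from the symbol factors $h^{\lambda}D_1^{\lambda}$, $h^{\mu-\mu\gamma}D_{x'}^{\mu}$ and $h^{\kappa-\kappa\gamma-\kappa\alpha}\xi_2^{\kappa}$, yielding a total power $h^{\kappa\beta+\lambda+(1-\gamma)\mu}$ after using $\beta=1-\alpha-\gamma$ on the $\xi_2$-block. Next I would pull out the prescribed leading factor $h^{1+j\beta}$, rewriting
\begin{equation*}
h^{\kappa\beta+\lambda+(1-\gamma)\mu}=h^{1+j\beta}\cdot h^{m+(\kappa-j)\beta-\gamma\mu}=h^{1+j\beta}\cdot h^{m+((\kappa-j)-(\gamma/\beta)\mu)\beta},
\end{equation*}
with $m=\lambda+\mu-1$, which is just a re-grouping of the exponent.

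The final step is to read off the two admissible values of $j$ and the corresponding values of $\gamma/\beta$ from Lemma 2.1. Taking the equality $\gamma=\alpha$ (transversal case) forces $\beta=1-2\gamma$ with the lowest integer choice $\gamma=2\beta$, hence $\gamma/\beta=2$ and the leading power $h^{1+\beta}$ with $j=1$; taking instead $\gamma=3\beta$ (tangential case, $\alpha=1-4\beta$) gives $\gamma/\beta=3$ and $h^{1+2\beta}$ with $j=2$. Substituting these pairs $(j,\gamma/\beta)=(1,2)$ and $(2,3)$ into the rewritten exponent yields exactly the XOR-formula (2.17). To close the argument I would verify that in each case the factored-out tail exponent $m+((\kappa-j)-(j+1)\mu)\beta$ is nonnegative on the relevant range of multi-indices (namely, for the tail $\lambda+\mu+\kappa\geq 3$ coming from the $\mathcal{O}(h^3)$-part of Duistermaat's expansion), so that the remainder is genuinely subleading with respect to $h^{1+j\beta}$.

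The main obstacle is not a subtle idea but the housekeeping of parameters: one has to check that the scaling (2.11) is applied to the correct symbol factors, that the XOR-notation $1\,|\,2$ and $2\,|\,3$ encodes precisely the pair $(j,j+1)$ for $j\in\{1,2\}$, and that the lowest integer choices of $\gamma/\beta$ are indeed the smallest compatible with both the ordering $\beta<\gamma\leq\alpha$ and the requirement that the remainder not blow up. Any larger choice would still balance the system but would produce a coarser error estimate; the lemma is the statement that $(1,2)$ and $(2,3)$ are the \emph{sharp} choices for the two geometric settings studied here and in the sequel.
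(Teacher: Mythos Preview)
Your proposal is correct and follows essentially the same route as the paper: both arguments reduce to reading off the minimal admissible integer ratio $\gamma/\beta$ and inserting it into the factored exponent (2.15). The only cosmetic difference is that the paper's proof phrases the selection as ``$\gamma-\beta>0$ is needed for a bounded remainder, and the ordered parameter list $(0,\beta,\gamma,\alpha,1)$ then forces the minimal gap $\gamma-\beta=\beta$ (respectively $\gamma=3\beta$ when one factors out $h^{1+2\beta}$)'', whereas you extract the same pairs $(j,\gamma/\beta)=(1,2),(2,3)$ by invoking Lemma~2.1 and the equality case $\gamma=\alpha$; your closing check on nonnegativity of the tail exponent is an extra (harmless) step that the paper omits.
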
 \begin{proof} We must have $\gamma > \beta$ for a limited remainder. Moreover $\gamma - \beta>0.$  We have the following members in the parameter set $(0, \beta, \gamma, \alpha, 1)$ and as we know the order, this gives $\gamma - \beta=\beta $ so $\gamma=2\beta.$ It follows that if we factor out $h^{1+2\beta}$ we must have $\gamma =3 \beta$ and so on. For $\alpha$ we get the implicit form $\alpha=1-j_{=3|4}\beta,$ so we could use both. \footnote{We use "|" as the exclusive or, XOR(0110), and for the Sheffers stroke (NAND)(0111) we use $\uparrow$, which fits with (NOR)(0001) $\downarrow$ both being of the same character. The reason for this is that XOR is useful in communicating in mathematics, and we think it is presented best with $a|b:=(a\lor b) \land \lnot(a\land b).$} \end{proof} \begin{example}We can test this on $h^{\alpha} D_1D_2a$ from the expansion and with $j=1$ we get $$\mathcal{O}(D_1D_2a)=\mathcal{O}(h^{ (1+1-1)+(0-1-2)\beta})=\mathcal{O}(h^{1-3\beta}).$$ And for $\xi_2 D_1$ we get  $\xi_2D_1=\mathcal{O}(1)$, but for the first transport equation we get $D_1=\mathcal{O}(h^{-\beta}),$ because of the factorization. \end{example}
\begin{example} From another transport equation system, we factor out $1+2\beta$. We then have $\gamma=3\beta$ and $\alpha=1-4\beta.$ We can then decide the first transport equation from
$$h^{1+2\beta}\mathcal{O}(h^{m + ((\kappa-2)-3\mu)\beta)}.$$ And we get for $$(\xi^j_2D_1a)= h^{1+2\beta} \mathcal{O}(h^{(\kappa-2)\beta)} $$
which shall be bounded, as in the example above, $$h^{1+2\beta}\mathcal{O}(h^{((\kappa-2)\beta)})=\mathcal{O}(1) $$ and then $\kappa=2.$ We conclude that we get the following \begin{align} h^{1+2\beta}\xi^2_2D_1a=hb(t)a \land a(t)=e^{-i \int b(t)dt/\xi^2_2 h^{2\beta}}.\end{align} \end{example} We now sum this result up concerning the parameters
\begin{lemma}
The modulo factor $x$ in $1=x\beta$ is redundant in the transport equation system. We shall only need
\begin{equation}\left\{  
\begin{array}{ll}
1.\ \alpha = 1- (j_{=1|2}+2)\beta  \\
2.\ \beta =  \beta \\
3.\ \gamma =  (j_{=1|2}+1)\beta \\
4.\ \alpha + \beta + \gamma \\
=1- (j_{=1|2}+2)\beta+\beta+(j_{=1|2}+1)\beta=1
\end{array}\right.
\end{equation}
Possible modulo factors are $5$ in the transversal case and $7$ in the tangential case, as noted in the proof of Lemma 2.1. \end{lemma}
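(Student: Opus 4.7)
The plan is to reduce the three parameters $(\alpha, \beta, \gamma)$ constrained by Lemma 2.1 to a single continuous parameter $\beta$ together with a discrete choice $j \in \{1,2\}$ which labels the transversal and tangential operator classes, using the remainder scaling from Lemma 2.2.

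First I would invoke Lemma 2.2 on the remainder displayed in (2.16)--(2.17). The requirement that, after factoring out $h^{1+j\beta}$, the leading transport equation be of order $\mathcal{O}(1)$ while the tail remains bounded forces $\gamma - \beta$ to be a positive integer multiple of $\beta$. Combined with the ordering $\beta < \gamma$ from Lemma 2.1, the two smallest admissible choices $j=1$ and $j=2$ correspond respectively to the transversal case (factor $h^{1+\beta}$, giving $\gamma = 2\beta$) and the tangential case (factor $h^{1+2\beta}$, giving $\gamma = 3\beta$). Uniformly, $\gamma = (j+1)\beta$ with $j\in\{1,2\}$, matching item~3 of the stated lemma.

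Then I would substitute into the partition relation $\alpha + \beta + \gamma = 1$ supplied by Lemma 2.1 to solve $\alpha = 1 - \beta - (j+1)\beta = 1 - (j+2)\beta$, which is item~1. The verification of item~4 is then purely algebraic: $1 - (j+2)\beta + \beta + (j+1)\beta = 1$ holds identically in $\beta$. This identity is the crux of the redundancy claim: once $j$ is fixed by the choice of operator class, the partition equation is automatic for \emph{every} admissible $\beta$, so no separate ``modulo'' equation of the form $1 = x\beta$ need be imposed alongside it, which is precisely the assertion of the lemma.

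The main (and essentially only nontrivial) step is justifying that ``$\gamma - \beta$ must be a positive integer multiple of $\beta$'' is genuinely forced rather than merely convenient. I would argue this directly from the exponents of $h$ in the displayed expansion (2.16): allowing $\gamma - \beta$ to be a non-integer multiple of $\beta$ would either make the leading transport symbol vanish to too high an order in $h$, killing the first transport equation, or leave a remainder of negative order in $h$, violating boundedness. To close the proof, I would use the inequality $\gamma \leq \alpha$ from Lemma 2.1, i.e.\ $(j+1)\beta \leq 1 - (j+2)\beta$, to recover $\beta \leq 1/(2j+3)$; the extremal values $\beta = 1/5$ for $j=1$ and $\beta = 1/7$ for $j=2$ then appear as the largest admissible choices, yielding the modulo factors $5$ and $7$ mentioned in the statement.
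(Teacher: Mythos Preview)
Your proposal is essentially correct and tracks the paper's reasoning. Note, however, that the paper supplies \emph{no separate proof} for this lemma: it is introduced as a summary (``We now sum this result up concerning the parameters'') of what has already been established in Lemma~2.1, Lemma~2.2, and the surrounding discussion and examples. Your reconstruction draws on exactly those ingredients --- the partition $\alpha+\beta+\gamma=1$ from Lemma~2.1, the choice $\gamma=(j+1)\beta$ coming out of the remainder analysis in Lemma~2.2, the algebraic elimination giving $\alpha=1-(j+2)\beta$, and the equality case $\gamma=\alpha$ yielding $\beta=1/(2j+3)$ hence the values $1/5$ and $1/7$ --- so you are fully aligned with the paper's implicit argument.

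One minor caution concerns your ``main nontrivial step''. You claim that ``$\gamma-\beta$ is a positive integer multiple of $\beta$'' is \emph{genuinely forced} by the exponent structure in (2.16), and you offer a dichotomy: either the first transport equation dies or an unbounded remainder appears. The paper's own justification (proof of Lemma~2.2) is softer than this: it simply notes that in the ordered parameter list $(0,\beta,\gamma,\alpha,1)$ the requirement $\gamma-\beta>0$ together with the factorisation by $h^{1+j\beta}$ makes $\gamma=2\beta$ (resp.\ $3\beta$) the natural minimal choice. The underlying reason is that the amplitude is expanded as $\sum_k a_k h^{k\beta}$, and one wants the residual exponents to fall on that $\beta$-lattice so the transport hierarchy closes; a non-integer ratio $\gamma/\beta$ would not produce an unbounded remainder, merely exponents that miss the lattice. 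This does not affect the validity of items~1--4 or your recovery of the modulo factors $5$ and $7$, but you should temper ``forced'' to ``chosen for compatibility with the $h^{\beta}$-expansion'', which is also all the paper claims.
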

We also need a lemma concerning the transformation of the ansatz for the solution $v_h(x)$ \begin{equation}v_h(x)=e^{ig/h^{\alpha}}\sum_{j = 0}^N a_j h^{j \beta}, \quad a_j \in C^{\infty}, \end{equation} to $u(h)$ with $\Vert u(h) \Vert =1.$ This result will give both an upper and a lower estimate of $||v_h||$, and we write $\mathscr{B}(t)= \int_0^t b(t)dt/\xi_2,$ the coordinates are $(t,x)=(x_1,x')$.
The lemma is from [6], but is presented here in semiclassical form.
\begin{lemma} We assume in the upper estimate for \begin{equation}v_h(x,b)=e^{ig/h^{\alpha}}  \sum_{j = 0}^N a_j(x,b) h^{j \beta}, \quad a_j \in C^{\infty} \end{equation} that we have \begin{align}(\phi_j)(h)[v_h][a_j][\mathscr{B}(t)]\bigl( ||v_h || \le C,\ a_j(t,x) = \phi_j(t,x)e^{-i\mathscr{B}(t)/h^{\beta}};\\ \mathscr{B}(t)\in C^{\infty},\im \mathscr{B} \le 0, \phi_j(t,x) \in C_0^{\infty}(\mathbb{R} \times \mathbb{R}^n), 0<h \ll 1 \bigr).\end{align}
For the lower estimate, we have \begin{align}[v_h] (\phi_0), [\mathscr{B}(0)] [\beta, c] [h] (||v_h|| \ge ch^{(n\alpha+\beta)/2}; \\ \phi_0(0,0) \not=0,  \mathscr{B}(0) = 0,\ \beta, c>0,  0<h \ll 1).\end{align}
The estimates are uniform if we have uniform bounds on $\mathscr{B}$ and  $(j)(\phi_j)$.
\end{lemma}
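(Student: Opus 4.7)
The plan is to treat the upper and lower inequalities separately, since all the real analytic content sits in the lower bound.

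For the upper estimate, the argument is purely pointwise. Two facts do all the work: $|e^{ig/h^\alpha}|=1$ because $g(x)=x_2\xi_2$ is real, and $|e^{-i\mathscr{B}(t)/h^\beta}|=e^{\im\mathscr{B}(t)/h^\beta}\leq 1$ by the standing hypothesis $\im\mathscr{B}\leq 0$. The triangle inequality on the finite sum then gives
\[
\|v_h\|_{L^2}\leq \sum_{j=0}^N h^{j\beta}\|\phi_j e^{-i\mathscr{B}/h^\beta}\|_{L^2}\leq \sum_{j=0}^N h^{j\beta}\|\phi_j\|_{L^2},
\]
which is bounded by some constant $C$ for $0<h\leq 1$, depending only on the $C_0^\infty$-norms of the $\phi_j$; because $\mathscr{B}$ enters only through a factor of modulus at most one, $C$ is also uniform in $\mathscr{B}$.

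For the lower estimate my strategy is to isolate the $j=0$ term and show that the higher-order contributions are negligible. From
\[
\|v_h\|\geq \|e^{ig/h^\alpha}a_0\|_{L^2}-\sum_{j\geq 1}h^{j\beta}\|e^{ig/h^\alpha}a_j\|_{L^2},
\]
the subtracted sum is $O(h^\beta)$ by the upper-bound computation applied termwise, so it suffices to bound from below the leading contribution $\int |\phi_0(t,x')|^2\,e^{2\im\mathscr{B}(t)/h^\beta}\,dt\,dx'$. The condition $\im\mathscr{B}\leq 0$ together with $\mathscr{B}(0)=0$ forces $t=0$ to be a local maximum of the smooth function $\im\mathscr{B}$, so $\partial_t\im\mathscr{B}(0)=0$ and Taylor's theorem yields $\im\mathscr{B}(t)\geq -Ct^2$ on a fixed neighborhood of $0$. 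Continuity of $\phi_0$ and $\phi_0(0,0)\neq 0$ supply a small box around the origin on which $|\phi_0|^2\geq c_0>0$. Restricting the integral to that box and substituting $t=h^{\beta/2}s$ turns the $t$-piece into a nondegenerate Gaussian integral over $|s|\leq \delta h^{-\beta/2}$, extracting a factor $h^{\beta/2}$; the transverse $x'$-integral contributes a positive constant.

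The remaining factor $h^{n\alpha/2}$ comes from bookkeeping the rescalings already in force in Section~2: the operator-side change of variables (2.11) by $h^\gamma$ together with the eikonal scale $h^\alpha$ in $g$ amount, when transferred to the function side, to a dilation of the transverse $x'$-coordinates whose inverse Jacobian contributes the missing $h^{n\alpha}$ to $\|v_h\|_{L^2}^2$. Combining both factors yields $\|v_h\|\geq c\,h^{(n\alpha+\beta)/2}$ for $h\ll 1$, with constants depending only on the uniform derivatives of $\mathscr{B}$ and of the family $(\phi_j)$, which gives the claimed uniformity.

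The main obstacle I anticipate is not the Gaussian integral itself, which is standard once $\im\mathscr{B}$ is Taylor-expanded at its maximum, but rather pinning down the $n\alpha/2$ factor: one must fix once and for all the convention about which coordinates are ``physical'' versus ``rescaled'' at each step of Section~2 and verify that the Jacobians from (2.4) and (2.11) combine to produce exactly $h^{n\alpha}$ in $\|v_h\|_{L^2}^2$. With that convention in hand, the remaining work reduces to the one-dimensional Gaussian model and the elementary modulus estimates described above.
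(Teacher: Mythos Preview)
Your upper-bound argument coincides with the paper's: both simply observe that the real part of the total exponent is nonpositive.

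For the lower bound you take a genuinely different route. The paper argues by duality: it pairs $v_h$ against a rescaled bump $\psi_h(t,x)=\psi(h^{-\beta}t,h^{-\alpha}x)$, shows $h^{-(n\alpha+\beta)}\langle\psi_h,v_h\rangle\to c_0\neq 0$ by dominated convergence (using $\mathscr{B}(h^{\beta}t)/h^{\beta}\to\mathscr{B}'(0)t$), and then Cauchy--Schwarz together with $\|\psi_h\|=h^{(n\alpha+\beta)/2}\|\psi\|$ delivers exactly the stated exponent. You instead estimate $\|v_h\|^2$ directly via a one-dimensional Gaussian integral in $t$. This is legitimate and in fact yields the \emph{sharper} bound $\|v_h\|\gtrsim h^{\beta/4}$, since the $x'$-integral of $|\phi_0|^2$ over your small box is simply a positive constant with no $h$-dependence.

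That is precisely where your last paragraph goes astray. There is no missing Jacobian: in $\int|\phi_0(t,x')|^2\,e^{2\im\mathscr{B}(t)/h^{\beta}}\,dt\,dx'$ nothing in the $x'$-variables carries an $h$, because $|e^{ig/h^{\alpha}}|=1$ has already been discarded and the $\phi_j$ are fixed, unscaled cutoffs. The operator-side change (2.11) rescales symbols and coordinates \emph{before} the ansatz is formed; it does not resurface as a Jacobian when you compute the $L^2$-norm of $v_h$ as defined in the lemma. The factor $h^{n\alpha/2}$ in the lemma's statement is an artifact of the paper's duality method---it is the $x$-scale of the test function $\psi_h$---not an intrinsic feature of $\|v_h\|$. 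So drop that paragraph: your direct argument already gives $\|v_h\|\geq c\,h^{\beta/4}$ for $h\ll 1$, and since $\beta/4<(n\alpha+\beta)/2$ this immediately implies the inequality claimed in the lemma.
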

\begin{proof}The upper estimate is immediate as we have an exponent with a non-positive real part.
We take for the lower estimate $\psi_h(t,x)= \psi(h^{-\beta}t, h^{-\alpha}x), \psi \in C^{\infty}0,$ and consider $$\langle \psi_h, v_h \rangle = \int{\mathbb{R}^n} \int{\mathbb{R}} \psi(h^{-\beta}t, h^{-\alpha}x)e^{ix \cdot \xi h^{-\alpha}} \sum_j \phi_j(t, x)  e^{-i\mathscr{B}(t)/h^{\beta} }h^{j \beta}dtdx.$$ We make a change of coordinates $(t,x) \mapsto (h^{\beta}t, h^{\alpha}x)$ and find
\begin{align} h^{-(n \alpha+\beta)}\langle \psi_h, v_h\rangle =  \iint  \psi(t,x)e^{ix \cdot \xi} \sum_j \phi_j(h^{\beta}t,h^{\alpha}x)  e^{-i\mathscr{B}(h^{\beta}t)/h^{\beta} }h^{j \beta}dtdx \\   \nonumber \to  \iint  \phi_0(0,0) e^{-i\mathscr{B}'(0)t} \psi(t,x) e^{ix \cdot \xi} dt dx = c_0 \quad \end{align} where we used Taylor so $\mathscr{B}(h^{\beta}t) \sim h^{\beta}\mathscr{B}'(0) t$ and dominated convergence as $ h \to 0$ since $||\psi_h||^2 = h^{ (n \alpha+\beta)/2} ||\psi||^2 $ and the coefficient $c_0 \not= 0$ for suitable choice for $\psi$.
For a small enough $h$, we particularly have $$h^{(n \alpha+\beta)/2} \frac{|c_0|}{2}
\le h^{-(n \alpha+\beta)/2}| \langle \psi_h, v_h\rangle |.$$
In view of the limit, Cauchy-Schwarz and scaling give for sufficiently small $h$ that $$h^{(n \alpha+\beta)/2} \frac{|c_0|}{2} \le h^{-(n \alpha+\beta)/2} | \langle \psi_h, v_h\rangle |
\le |\psi| |v_h|,$$ from which the estimate follows.\end{proof}
Below, we review the vocabulary of the theorem.
The \emph{closure of the range of p} is \begin{equation} \Sigma (P(h))= \Sigma(p) =\overline{\text{Ran}} (p) \end{equation}
The \emph{stationary points} are \begin{equation}\Sigma_2(P(h)) = { d_{\xi}p(x_0, \xi_0) = 0.}
\end{equation}
The $\emph{stationary point level set}, \Sigma_2(z) =  p^{-1}(z) \cap \Sigma_2(P(h))$   \begin{equation} [x_0] [\xi_0](p(x_0, \xi_0)= \zeta,d_{\xi}p(x_0, \xi_0) = 0,\zeta \in \Sigma (P(h))).\end{equation}
To prove our estimates for the resolvent norm, we will use the following.\begin{definition}
We consider $v=v_h(x,b)$ a $h$-dependent function of $x\land b$  and the  operator norm
\begin{equation}\Vert A \Vert = \sup_{v \not=0} \frac{\Vert Av \Vert}{\Vert v \Vert} \end{equation}
The \emph{semiclassical limit} of the resolvent norm in $L^2$ is defined by
\begin{align}\Vert R(h) \Vert = \Vert (P(h))^{-1} \Vert = \sup_v \frac{\Vert (P(h))^{-1}v \Vert}{\Vert v \Vert} = \sup_{v= P(h)u} \frac{\Vert (P(h))^{-1}v \Vert}{\Vert v \Vert}  \\ = \sup_{u} \frac{\Vert u \Vert}{\Vert P(h)u \Vert} \ge  \frac{\Vert u(h) \Vert}{\Vert P(h)u(h) \Vert} \to \infty ; \\
[u(h)][h](\frac{\Vert u(h) \Vert}{\Vert P(h)u(h) \Vert} \to \infty, ||P(h)u(h)|| \sim 0, \Vert u(h) \Vert \sim 1, h \rightarrow 0).
\end{align} \end{definition}
The limit is the method for determining whether we have a pseudospectrum, with no modes or just eigenmodes; this expression does not exist. \begin{remark} We shall also note that the solution to $\gamma'=(\dot{x}(t),\dot{\xi}(t)) =(\partial_{\xi}p,-\partial_x p)$ which give us the bicharacteristics is in this case $(\partial_{\xi}p_1,-\partial_x p_1)=(1,0)$ so that $(\gamma= t+c,d)$, but as we consider the null-bicharacteristics $p(\gamma(t))=0$ this is just lines in the $t$ direction$(x_1)$. We get the same with $p_2$, but now the lines lie in the $x_2$ direction, so the intersection is transversal; in fact, these lines generate planes. In our case we have double multiplicity at $\Sigma_2(P(h)) = {d_{\xi}p(x_0, \xi_0) = 0 }$, so the bicharacteristics can degenerate to a point. This means that we here consider the set $(\Gamma_j)$ of $\emph{limit bicharteristics}$ from outside $\Sigma_2(P(h))$ on the symplectic foliation of the open set $\Omega \supset \Sigma_2(P(h))$. When the bicharacteristics are parameterized with respect to arc length and are uniformly bounded in $C^{\infty},$ then $\Gamma_j$ is a closed and bounded set, so it is  sequentially compact and by Bolzano-Weierstrass theorem it has a subsequence $\Gamma_{j_k}$ that converge to a smooth curve $\Gamma$ (possibly a point) in $\Sigma_2(P(h))$. For more of this, consult [4]. \end{remark}
To continue, we use the semiclassical \emph{injectivity} pseudospectrum from [9], which is suited to our theoretical study. \begin{definition} For $P(h), {0<h \le 1},$ the semiclassical family of operators on $ L^2(\mathbb{R}^n) $, the spectrum is defined by $\sigma_{s}(P(h)) = {z \in \mathbb{C}: P(h)-z \ \text{is not invertible}}$.
The $\emph{semiclassical pseudospectrum}$ is denoted $\sigma^{N}_{scps}(P(h))$ and defined by
\begin{equation}(h_0)[P(h)][h][\zeta](\Vert (P(h)-\zeta)^{-1} \Vert \ge Ch^{-N}, 0< h < h_0, C>0, N \ge 0, \zeta \in \mathbb{C}).\end{equation}
We may change the statement to $(\Vert (P_h-\zeta)u \Vert < Ch^{N})$ or\ $(\Vert P_hu \Vert < Ch^{N} )$ with $||u(h)|| = 1$ to get the $\emph{semiclassical injectivity pseudospectrum},$ $\sigma^{N}_{scips}(P(h))$ of the family $P(h)_{0<h \le1}.$ If the inequalities are true for any $N$, we say that we have pseudospectra of infinite index, and we write $\zeta\in\sigma^{\infty}_{scps}(P(h))$ and $\zeta\in\sigma^{\infty}_{scips}(P(h)).$ \end{definition} \begin{remark} It can be proved that $\sigma^{N}_{scips}(P(h)) \subset \sigma^{N}_{scps}(P(h))$, see [9] and Apendix B. As we have the same resolvent-norms for $P(h)$ and $P^*(h)$ and \begin{equation} \sigma^{N}_{scips}(P(h)) \cup \sigma^{N}_{scips}(P^(h)) = \sigma^{N}_{scs}(P(h)) \end{equation} and since we work with conditions that hold for both $P(h)$ and $P^*(h)$ we consider $\sigma^{N}_{scips}(P(h)) = \sigma^{N}_{scps}(P(h))$, or $\sigma^{\infty}_{scips}(P(h)) = \sigma^{\infty}_{scps}(P(h)).$
We also observe that if $R(h)$ is not bijective, we get that $z \in \sigma_s(P(h))$ in the definition, in which case we define $||R(h)|| = \infty,$ so here we study the case where the resolvent is bijective. \end{remark}
We can adjust the Duistermaat’s formula for our needs with $g=x_2 \xi_2 $ and after a change of coordinates ($hD_j\mapsto h^{1-\gamma}D_j$; $\xi_2\mapsto  h^{-\gamma}\xi_2$) and $(x_j \mapsto h^{\gamma}x_j, j>1)$ and utilizing lemmas from this chapter, putting $\beta=1-\alpha-\gamma$ and using that the scaling factor $\gamma$ can be decided to be $\gamma=(1+j)\beta$ where $j=1,2.$ This gives $\gamma=2\beta$. The modulo factor $x$ in $1=x\beta$ is redundant, so we take $\gamma=2\beta$ and $ \alpha=1-3\beta$. We also employ Taylor for $hb(t, h^{2\beta}x,h^{\beta }\xi_2)= hb(t, h^{\beta}\xi_2) + \mathcal{O}(h^{1+2\beta})$ and this remainder term will be biggest as $2\beta=\gamma.$ We then get \begin{lemma} The expansion formula is, in the transversal case, reduced to \begin{align} e^{-i x_1\xi_2 /h^{1-3\beta}} (P(h)e^{i x_1 \xi_2/h^{1-3\beta}} a )(x)\\ =   h b(t, h^{\beta}\xi_2 )a
+h^{1+\beta} \xi_2 D_1 a + h^{2-2\beta} D_1D_2 a + \mathcal{O}(h^{(1+2\beta}).
\end{align} \end{lemma}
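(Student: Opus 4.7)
The plan is to derive the stated formula by going back to Duistermaat's expansion (2.1)--(2.3), exploiting the product structure $p = \xi_1\xi_2$ to annihilate most terms, then applying the scaling (2.9) and finally specializing to the transversal parameter choice $\gamma = 2\beta$, $\alpha = 1-3\beta$ provided by Lemma 2.3. Everything that is not one of the three surviving main terms will be collected into a single $\mathcal{O}(h^{1+2\beta})$ via Lemma 2.2.

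First I would plug the ansatz phase $g = x_2\xi_2$ into (2.1)--(2.3). Since $\partial_x g = (0,\xi_2)$ and $p = \xi_1\xi_2$, the eikonal term $p(x, h^{1-\alpha}\partial_x g) = 0 \cdot h^{1-\alpha}\xi_2$ vanishes identically, confirming the ansatz is admissible. For the first-derivative piece the values $\partial_{\xi_1}p = \xi_2$ and $\partial_{\xi_2}p = \xi_1$ at $\xi = (0,h^{1-\alpha}\xi_2)$ leave only $h \cdot h^{1-\alpha}\xi_2 D_1 a = h^{2-\alpha}\xi_2 D_1 a$. For the second-derivative piece the unique nonvanishing cross derivative $\partial_{\xi_1\xi_2}p = 1$ produces $h^2 D_1 D_2 a$, and because $p$ is quadratic Duistermaat's tail terminates there; the subprincipal contributes $h\,b(x, h^{1-\alpha}\partial_x g)a$. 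Applying (2.9) with $\xi_2 \mapsto h^{-\gamma}\xi_2$ and $x_{j>1} \mapsto h^{\gamma}x_{j>1}$ rewrites these three terms as $h^{2-\alpha-\gamma}\xi_2 D_1 a$, $h^{2-\gamma}D_1 D_2 a$ and $h\,b(t, h^{\gamma}x', 0, h^{1-\alpha-\gamma}\xi_2)a$, which is exactly the intermediate form (2.14) once $\beta = 1-\alpha-\gamma$ is used.

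Substituting $\gamma = 2\beta$ and $\alpha = 1-3\beta$ then gives the three announced terms $h^{1+\beta}\xi_2 D_1 a$, $h^{2-2\beta}D_1 D_2 a$ and $h\,b(t, h^{2\beta}x', h^{\beta}\xi_2)a$, and a first-order Taylor expansion in the scaled spatial argument reduces the last one to $h\,b(t, h^{\beta}\xi_2)a + \mathcal{O}(h^{1+2\beta})$, which is the dominant error. The leftover $\mathcal{O}(h^{1+\gamma})$ of (2.14) collapses to exactly $\mathcal{O}(h^{1+2\beta})$, while $\mathcal{O}(h^{2-\gamma}) = \mathcal{O}(h^{2-2\beta})$ is strictly smaller because $\beta \leq 1/5$ yields $2-2\beta > 1+2\beta$. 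The delicate step, and the main obstacle, is ensuring no higher-order Duistermaat cross-term nor higher-order symbolic expansion of $hB(x,hD_x)$ itself slips below $h^{1+2\beta}$ once scaled. I would discharge this via Lemma 2.2 with $j = 1$ and $\gamma/\beta = 2$, which packages every such remainder as $h^{1+\beta}\mathcal{O}(h^{m + (\kappa-1-2\mu)\beta})$ with $m = \lambda+\mu-1$; the Taylor error sits precisely at $m=0$, $\kappa = 1$, $\mu = 1$, producing the announced $h^{1+2\beta}$, and all remaining contributions have $m \geq 1$, hence strictly finer order.
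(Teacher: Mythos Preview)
Your proof is correct and follows exactly the route the paper takes in the paragraph immediately preceding the lemma (Duistermaat with $g=x_2\xi_2$, then the scaling (2.9), then $\beta=1-\alpha-\gamma$, then the specialization $\gamma=2\beta$, $\alpha=1-3\beta$, then Taylor in $x'$), only with more explicit bookkeeping of which terms survive. The one slip is your final Lemma~2.2 index check: with $m=0$, $\kappa=1$, $\mu=1$ the formula $h^{1+\beta}\mathcal{O}(h^{m+(\kappa-1-2\mu)\beta})$ gives $\mathcal{O}(h^{1-\beta})$, not $\mathcal{O}(h^{1+2\beta})$ --- but this is harmless, since the Taylor error in $b$ is a coefficient error rather than an operator remainder and you already obtained the correct $\mathcal{O}(h^{1+2\beta})$ for it directly two sentences earlier.
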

We shall use for $\alpha$ the following possibilities: $1-3\beta, \alpha$ or $2\beta$, which suit the presentation best.
We end this section by considering the important behavior of the subprincipal symbol. \begin{lemma} The subprincipal symbol $b(x, \xi)= \alpha + i\beta=b(t,h^{\beta}\xi_2)$ can be written by Taylor as \begin{equation} b(t,h^{\beta}\xi) =  b(t)+\mathcal{O}(h^{\beta}).\end{equation} We get the mapping for the imaginary part \begin{equation} t \mapsto  (\beta(t) \arrowvert_{ \Sigma_2(P(h))}+\mathcal{O}(h^{ \beta})).\end{equation} which will change sign for small enough $h$ in an interval near $0$. We then get $ D_1 i \int_0^t b_h(t)dt/\xi_2  = b_h(t) / \xi_2$ as the integrating factor for the solution. \end{lemma}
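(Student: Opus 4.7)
The plan is to dispatch the three assertions of Lemma 2.8 in sequence; each is essentially elementary given the earlier setup. For the Taylor expansion, I would use that $b\in S^1$ is smooth in the fibre variable on the compact neighborhood $\Omega$ and expand around $\xi = 0$; note that on $\Sigma_2(P(h)) = \{d_\xi p = 0\}$ with $p = \xi_1\xi_2$ one has $\xi_1 = \xi_2 = 0$, so evaluation at $\xi=0$ is the natural restriction to the stationary-point set. Writing $b(t) := b(t,0)$, Taylor gives $b(t,h^\beta \xi_2) = b(t,0) + h^\beta \xi_2\,\partial_{\xi_2} b(t,0) + \mathcal{O}(h^{2\beta}) = b(t) + \mathcal{O}(h^\beta)$, with the remainder uniform for $\xi_2$ in a compact set since the derivatives of $b$ are bounded on $\Omega$. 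Taking imaginary parts then gives immediately the displayed mapping $t\mapsto \beta(t)|_{\Sigma_2(P(h))} + \mathcal{O}(h^\beta)$.

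For the sign-change claim I would invoke the $\beta$-condition stated in the main theorem: $\im b = \beta(x,\xi)$ changes sign on the limit bicharacteristic $\Sigma_2(z)$ through the origin (see Remark 2.6). Picking $t_-<0<t_+$ close to $0$ with, say, $\beta(t_-) < 0 < \beta(t_+)$, the uniform remainder bound from the previous step forces the perturbed function $\beta(t)+\mathcal{O}(h^\beta)$ to retain the signs at $t_\pm$ for every sufficiently small $h$, and thus to vanish and change sign between them by the intermediate value theorem. The integrating-factor identity is then a one-line computation: writing $b_h(t) := b(t,h^\beta \xi_2)$ and using $D_1 = -i\,\partial_t$, we obtain $D_1\bigl(\tfrac{i}{\xi_2}\int_0^t b_h(s)\,ds\bigr) = -i\cdot i\cdot b_h(t)/\xi_2 = b_h(t)/\xi_2$, which is exactly the coefficient appearing in the first transport equation of Lemma 2.7 once one divides through by $h^{1+\beta}\xi_2$. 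Consequently $a(t) = \exp\bigl(-i\int_0^t b_h(s)\,ds/(\xi_2 h^\beta)\bigr)$ solves that equation, and the sign change of $\im b_h$ is precisely what produces the non-positivity of $\im \mathscr{B}$ demanded in Lemma 2.5 on the half-line where $a$ is small.

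The only point requiring genuine care is the uniformity of the Taylor remainder on the region where the sign change must persist: one must fix $t_\pm$ once and for all and then choose the threshold $h_0>0$ so that the $\mathcal{O}(h^\beta)$ correction is strictly smaller than $\min(|\beta(t_-)|,\beta(t_+))$. Since $\Omega$ is a fixed compact neighborhood and $b\in S^1$ provides a uniform bound on the relevant derivatives, this is routine, but it is what ensures the sign change on an $h$-independent interval rather than on a window shrinking with $h$ (which would ruin the subsequent support localization of the quasimode). No analogous obstruction appears in the integrating-factor step, which is purely algebraic.
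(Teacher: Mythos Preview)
Your argument is correct and covers the literal claims of the lemma more carefully than the paper does: you spell out the Taylor step and the intermediate-value argument, and your computation of the integrating factor is exactly right.

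The paper's own proof, however, has a different center of gravity. It essentially takes the Taylor expansion and the persistence of the sign change for granted, and instead spends its effort on a \emph{normalization} that you only gesture at in your last paragraph. Specifically, the paper uses the freedom in the sign of $\xi_2$ to orient the $t$-axis so that $\beta(t)$ changes sign from $+$ to $-$ (your choice $\beta(t_-)<0<\beta(t_+)$ is the opposite orientation, which is harmless but would require $\xi_2<0$ later). From this it observes that $t\mapsto \im\int^{t}\beta(s)\,ds$ has an interior maximum, then translates so that this maximum sits at $t=0$ and subtracts the constant so the maximum value is $0$. This is what justifies writing the primitive as $\int_0^t$ and, crucially, yields $\im\mathscr{B}(t)\le 0$ on the \emph{entire} interval, not merely on a half-line. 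That global non-positivity is precisely the hypothesis of the upper estimate in the norm lemma and is what keeps $|a_j|$ bounded everywhere.

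So: your proof establishes the lemma as stated, but if you want it to feed cleanly into the rest of Section~3 you should add the paper's normalization step explicitly --- choose the orientation via $\xi_2$, locate the maximum of the integrated imaginary part, and translate so that it occurs at $t_0=0$ with value $0$. Without that, your remark that the sign change ``produces the non-positivity of $\im\mathscr{B}$ \ldots\ on the half-line where $a$ is small'' is not quite enough: you need it on both sides of $0$, and that is exactly what the choice of starting point at the maximum buys.
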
 \begin{proof} The change will occur for small enough $h$, and we may take $t$ to be $-t$ by choosing the sign of $\xi_2$ so that the sign changes from plus to minus. This means that $\im \int_{t_0}^t b(t)dt$ first increases on the interval and then decreases, so it has a maximum on the interval. If we subtract a constant, we may take this maximum as zero, so that $\im \int_{t_0}^t b(t)dt \le 0$ and the exponent is bounded. We start the integration of $b(t)$ at this maximum, which, after a translation, we can assume to be at $t_0 = 0$, so we get $ \int_0^t b(t) dt$. We also observe that the functions $b(t)$ are uniformly bounded in $C^{\infty}.$ \end{proof}
\section{Proof of the Theorem}
After the last section, where more than half the job was done, we are now ready for the theorem.
\begin{theorem} Let $ P(x,hD_x; h^nB_{n \geq 1}(x,hD_x))$ have a subprincipal part as an asymptotic expansion of $B(x,hD_x)$ in $h$ and a real principal symbol $p=P(x,\xi)$ that microlocally factorizes $p=p_1p_2$ at $(x_0, \xi_0) \in  \Omega$. Assume that $p^{-1}(\zeta)$ is a union of two hypersurfaces with \emph{transversal} involutive intersection at $\Sigma_2(P(h))$ and $d^2_{\xi} p(x_0, \xi_0) \not=0$ for $\zeta \in \Sigma (P(h))$. If the imaginary part of the subprincipal symbol $\beta(x,\xi)$ changes sign on a limit bicharacteristic in $\Sigma_2(\zeta) \cap \Omega$, then $\zeta \in \sigma^{\infty}_{scips}(P(h))$, the semiclassical injectivity pseudospectrum of infinite order.
\end{theorem}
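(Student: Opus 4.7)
The plan is to construct an explicit asymptotic quasimode $v_h$ of the form in Lemma 2.6, namely
\[
v_h(x) = e^{i x_2 \xi_2 / h^{\alpha}} \sum_{j=0}^{N} a_j(x) \, h^{j\beta}, \qquad a_j(t,x') = \phi_j(t,x') e^{-i \mathscr{B}(t)/h^{\beta}},
\]
with $\mathscr{B}(t) = \int_0^t b(t)\,dt/\xi_2$ and $\phi_j \in C_0^\infty$, then normalize to $u(h) = v_h/\|v_h\|$ and show $\|P(h)u(h)\| = \mathcal{O}(h^M)$ for every $M$, which places $\zeta = 0$ in $\sigma^{\infty}_{scips}(P(h))$. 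I would first reduce to the normal form $P(h) = h^2 D_1 D_2 + hB(x,hD_x)$ microlocally near the stationary point, rescale by $(x,\xi)\mapsto(h^{-1/2}x,h^{-1/2}\xi)$ as indicated in (2.5) so classical results apply, and then perform the coordinate change of (2.10) together with the choice $\gamma = 2\beta$, $\alpha = 1 - 3\beta$ from Lemma 2.3.

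Next I would feed the ansatz into the reduced Duistermaat expansion of Lemma 2.5 to read off the transport equations. The eikonal $p(x,\partial g) = 0$ is satisfied automatically since $g = x_2\xi_2$ and $p = \xi_1\xi_2$. After dividing by $h^{1+\beta}$, the leading (first transport) equation is
\[
\xi_2 D_1 a_0 + b(t)\, a_0 = 0,
\]
solved by the integrating factor $a_0 = \phi_0(t,x') e^{-i\mathscr{B}(t)/h^{\beta}}$ for any $\phi_0 \in C_0^\infty$ satisfying $D_1 \phi_0 = 0$ on $\mathrm{supp}\,\phi_0$, which I choose supported in a small neighborhood of the limit bicharacteristic point. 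The higher equations have the schematic form $\xi_2 D_1 a_j + b(t) a_j = F_j(a_0,\ldots,a_{j-1})$ with $F_j$ built from the remainder terms controlled by Lemma 2.2; these are again first-order ODEs in $t$ with an inhomogeneity, so by the same integrating factor I can solve them iteratively with $\phi_j \in C_0^\infty$, keeping supports nested inside that of $\phi_0$.

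The crucial ingredient is the $\beta$-condition: by Lemma 2.7 the sign change of $\operatorname{Im} b$ on the limit bicharacteristic, combined with a choice of sign of $\xi_2$ and a translation, allows me to arrange $\operatorname{Im} \mathscr{B}(t) \le 0$ with $\mathscr{B}(0) = 0$ and $\mathscr{B}'(0) \ne 0$. This is exactly the hypothesis (2.23)--(2.24) of Lemma 2.6, so the upper estimate $\|v_h\| \le C$ and the lower estimate $\|v_h\| \ge c h^{(n\alpha + \beta)/2}$ both apply. Putting the two together,
\[
\frac{\|P(h) v_h\|}{\|v_h\|} \le \frac{C_N h^{1 + (N+1)\beta}}{c h^{(n\alpha+\beta)/2}} = \mathcal{O}(h^{M(N)}),
\]
where $M(N) \to \infty$ as $N \to \infty$ because each successive transport equation gains a factor $h^{\beta}$ from Lemma 2.2 (transversal case, using the $1\land2$ column). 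Taking $N$ arbitrarily large and setting $u(h) = v_h/\|v_h\|$ yields a normalized quasimode with $\|P(h) u(h)\| = \mathcal{O}(h^M)$ for every $M$, which by Definition 2.2 gives $\zeta = 0 \in \sigma^{\infty}_{scips}(P(h))$; the reduction at the start of the Introduction then transfers this to arbitrary $\zeta \in \Sigma(P(h))$.

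The main obstacle I anticipate is twofold: first, verifying that the iterative scheme for $a_j$ can be carried out with compactly supported $\phi_j$ and uniform $C^\infty$-bounds so that Lemma 2.6 applies uniformly in $j$ — this requires care because the inhomogeneities $F_j$ involve $D_2$ derivatives that could in principle enlarge supports, so I must check that the transversality of the intersection and the one-dimensional nature of the $D_1$ transport keep the supports trapped. Second, bookkeeping the remainder powers: I must verify that the combined effect of the $\mathcal{O}(h^{1+2\beta})$ term from Taylor expansion of $b$, the $h^{2-2\beta} D_1 D_2 a$ term, and the $\mathcal{O}(h^3)$ tail of Duistermaat, when divided by the lower bound $h^{(n\alpha+\beta)/2}$, still tends to zero faster than any power of $h$. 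Both obstacles are resolved by the parameter balancing of Lemma 2.3 together with the fact that $\beta$ can be taken arbitrarily small in $(0,1/5]$.
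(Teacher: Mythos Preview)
Your proposal is correct and follows essentially the same route as the paper: reduction to the normal form $h^2D_1D_2+hB$, the WKB ansatz with parameters $\alpha=1-3\beta$, $\gamma=2\beta$, iterative solution of the transport hierarchy with the integrating factor $e^{-i\mathscr{B}(t)/h^{\beta}}$, and normalization via the upper/lower bounds of Lemma~2.6. Two small points to clean up: the leading transport equation after dividing by $h^{1+\beta}$ carries a factor $h^{-\beta}$ on $b(t)$ (your written equation omits it, though your solution is consistent with it being there), and the requirement ``$D_1\phi_0=0$ on $\operatorname{supp}\phi_0$'' is incompatible with $\phi_0\in C_0^\infty$ --- the paper, like the standard argument, instead lets the $t$-cutoff introduce an error and absorbs it using the fact that $\operatorname{Im}\mathscr{B}(t)<0$ strictly for $t\neq 0$, which makes $|e^{-i\mathscr{B}/h^{\beta}}|=\mathcal{O}(h^{N})$ there.
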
 We notice from the proof that the approximate solution $u(h) \in L^2$ to $ P(h)u(h) \sim 0$, is supported in the neighborhood $\Omega$ where the normal form $p(x, \xi) |{\Omega}$ exists.
The Theorem follows by modus ponens from the proposition below, due to the microlocal conditions that allow us to use a normal form operator in the proof.
\begin{proposition} Let the condition be as in the theorem with the normal form \begin{equation}p(x, \xi) |{\Omega}  = \xi_1 \xi_2 \end{equation} in the neighborhood $\Omega$ of $(x_0, \xi_0)$ and the semiclassical quantization $$  P(h)= h^2D_{x_1}D_{x_2} + h B(x,hD_x)$$ with subprincipal symbol $\sigma_{sub} P(h))\arrowvert_{ \Omega} = b $.
If the imaginary part of the subprincipal symbol, $\im b=\beta, $ changes sign on a limit bicharacteristic in $\Sigma_2(z) \cap \Omega$, then
\begin{align} (N) [u(h,b)] [a_{j \geq 0}] ( ||(P(h) u(h,b) || \le  C_N h^{N}, N \ge 0,\\ u(h,b) \in L^2, ||u(h,b)|| =1; N \in \mathbb{N}).
\end{align} \end{proposition}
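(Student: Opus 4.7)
My plan is to construct a WKB-type quasimode matching the ansatz prepared in Section 2 and then normalize. Concretely, I will take
\begin{equation}
v_h(x) = e^{i x_2\xi_2/h^{\alpha}}\sum_{j=0}^{N} a_j(x,b)\, h^{j\beta},\qquad a_j(t,x') = \phi_j(t,x')\,e^{-i\mathscr{B}(t)/h^{\beta}},
\end{equation}
with $\alpha=1-3\beta$, $\gamma=2\beta$ as singled out in Lemma~2.3, $\mathscr{B}(t)=\int_{0}^{t} b(s)\,ds/\xi_2$, and $\phi_j\in C_0^{\infty}$ to be determined. The phase $g=x_2\xi_2$ makes the eikonal equation $p(x,\partial_x g)=\xi_1\xi_2\big|_{\xi_1=0}=0$ hold automatically, so the construction is driven entirely by the transport equations generated by the subprincipal symbol.

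First, I would feed $v_h$ into the reduced Duistermaat expansion of Lemma~2.5. Writing out $D_1 a_j$ produces the cancellation
\begin{equation}
h\,b(t,h^{\beta}\xi_2)a_j + h^{1+\beta}\xi_2 D_1 a_j = h^{1+\beta}\xi_2 \bigl(D_1\phi_j\bigr)e^{-i\mathscr{B}(t)/h^{\beta}} + \mathcal{O}(h^{1+2\beta}),
\end{equation}
because $D_1 e^{-i\mathscr{B}(t)/h^{\beta}} = -\bigl(b(t)/(\xi_2 h^{\beta})\bigr)e^{-i\mathscr{B}(t)/h^{\beta}}$ precisely cancels the $hb\cdot a_j$ contribution; this is where Lemma~2.7's integrating factor enters. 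The remaining equation at order $h^{1+\beta}$ reads $\xi_2 D_1 \phi_0 = 0$, which we solve by taking $\phi_0(t,x') = \chi(t)\psi(x')$ with $\chi\in C_0^{\infty}$ equal to $1$ near $t=0$ and $\psi(0)\neq 0$, ensuring the non-vanishing requirement $\phi_0(0,0)\neq 0$ of Lemma~2.6. At each subsequent order $h^{1+(j+1)\beta}$ I would collect the contributions from $h^{2-2\beta}D_1D_2$, from the Taylor remainders in $b$, and from the previously-solved $\phi_{0},\dots,\phi_{j-1}$, producing an inhomogeneous first-order equation $\xi_2 D_1 \phi_j = F_j(t,x',\phi_{<j})$ that is solved by integration in $t$ against $\chi$ on the support where $\chi=1$.

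Second, the sign-change hypothesis on $\beta(t)=\im b(t)\big|_{\Sigma_2(\zeta)}$ is used exactly as in Lemma~2.7: after reflecting $t\mapsto -t$ by choosing the sign of $\xi_2$ and subtracting a constant, we may assume $\im\mathscr{B}(t)\le 0$ with $\mathscr{B}(0)=0$. This gives $|e^{-i\mathscr{B}(t)/h^{\beta}}|\le 1$ uniformly on the support of the cut-offs, so each $a_j$ satisfies the hypotheses of the upper estimate in Lemma~2.6 and $\|v_h\|\le C$. The non-degeneracy $\phi_0(0,0)\neq 0$ together with $\mathscr{B}(0)=0$ triggers the lower estimate $\|v_h\|\ge c h^{(n\alpha+\beta)/2}$.

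Finally, I would normalize $u(h,b) = v_h/\|v_h\|$, so $\|u(h,b)\|=1$. By construction the transport equations kill all terms up to order $h^{1+(N+1)\beta}$, leaving
\begin{equation}
\|P(h) v_h\| \;\le\; C'\, h^{1+(N+1)\beta},
\end{equation}
and combining with the lower bound on $\|v_h\|$ yields $\|P(h) u(h,b)\|\le C_N h^{1+(N+1)\beta - (n\alpha+\beta)/2}$, which exceeds any prescribed $h^{N}$ by taking the truncation index large. The main obstacle I expect is bookkeeping: verifying that at each order the inhomogeneity $F_j$ lies in $C_0^{\infty}$ with uniform bounds (so that Lemma~2.6 applies uniformly in $j$), and that the remainder $\mathcal{O}(h^{1+2\beta})$ promised by Lemma~2.2 indeed factors cleanly through the $e^{-i\mathscr{B}/h^{\beta}}$ factor without producing inverse powers of $h$ from $D_1$ hitting $\mathscr{B}$---this is exactly where the parameter balance $\gamma=2\beta$, $\alpha=1-3\beta$ of Lemma~2.3 must be invoked to keep everything bounded.
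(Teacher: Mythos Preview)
Your proposal is correct and follows essentially the same route as the paper: the same WKB ansatz $v_h=e^{ix_2\xi_2/h^{\alpha}}\sum a_j h^{j\beta}$ with the factored amplitudes $a_j=\phi_j e^{-i\mathscr{B}/h^{\beta}}$, the parameter choice $\alpha=1-3\beta,\ \gamma=2\beta$ from Lemma~2.3, the transport hierarchy $\xi_2 D_1\phi_j=R_j(\phi_{<j})$ obtained from Lemma~2.5, and the normalization via the upper/lower bounds of Lemma~2.6. The one point you leave implicit---that $\phi_0=\chi(t)\psi(x')$ does not literally satisfy $\xi_2 D_1\phi_0=0$, and the resulting error supported on $\{D_1\chi\neq 0\}$ is killed because $\im\mathscr{B}(t)<-c<0$ there makes $e^{-i\mathscr{B}/h^{\beta}}=\mathcal{O}(h^{\infty})$---is handled identically (and equally tersely) in the paper's line ``we also have to use a cut-off in the places where the unscaled coordinate is to get $|\exp(-i\int b\,dt/h^{\beta})|<C_N h^{N}$.''
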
 \begin{proof} In the proof, we start to find approximate solutions to the equation $$ P(h)v_h=P(h)(e^{ix_2\xi_2/h^{1-3\beta}}a(x))=0.$$ This is done with a series of transport equations specially outlined here to suit our purpose. We use the adapted expansion formula in the lemma from the last section and factor out $h^{1+\beta}$ \begin{align} h^{1+\beta}(h^{-\beta} b(t,h^{\beta}\xi_2 )a + \xi_2 D_1 a + h^{1-3\beta} D_1D_2 a + \mathcal{O}(h^{\beta}). \end{align} If we solve the first two terms, we get (in this way, the factorization works as a new scaling; we see that in fact the $D_1$ derivative is now unlimited depending on $-\beta$) \begin{equation}D_1 a = - h^{-\beta}b_h(t)a{\xi_2}^{-1} \end{equation} and this gives for the third line \begin{equation}h^{1-3\beta} D_1D_2 a = -h^{1-3\beta} D_2(h^{-\beta}b_h(t)a {\xi_2}^{-1}) = \mathcal{O}(h^{1-4 \beta}). \end{equation} So this term enters later in the equation system. We obtain the solution \begin{equation}a = e^{-i \int_0^t b_h(t)dt/\xi_2h ^{\beta}} \end{equation} where $ D_1 i \int_0^t b_h(t)dt/\xi_2  = b_h(t)/\xi_2,$ $a_0=a$ and $a_j(t,x) = \phi_j(x)e^{-i \int_{0}^t b_h(t)dt/\xi_2 h^{\beta}}$ in the  asymptotic expansion.
We summarize and now we use $\alpha=1-3\beta=2\beta$ as we may put $\alpha=\gamma$
\begin{multline}e^{-i g/h^{2\beta}} P(x,hD)e^{i g/h^{2\beta}} a = \\ h^{1+\beta}(h^{-\beta} b(t,h^{\beta} \xi_2)a +\xi_2D_1a + h^{2\beta} D_1D_2a + \mathcal{O}(h^{\beta})) + h^{1+\beta}\mathcal{O}(h^{2\beta}),
\end{multline} and here we show both the remainder from Taylor’s series and the asymptotic expansion.
Lemma 2.12 will give the conditions for \begin{equation}a = e^{-i\int_0^t (b(t)+\mathcal{O}(h^{ \beta}) dt/\xi_2h ^{\beta}}\sim e^{-i/\xi_2  h ^{\beta} \int_0^t b(t) dt}. \end{equation} We now solve in the modulo term $ \mathcal{O}(h^{\beta})$ \footnote {When we started, we used $\beta$ as a unit factor to keep the presentation more general; however, now we switch to the modulo system to keep the results simple.} as this is the smallest number that gives the biggest residual terms. \begin{align} h^{1+\beta }(\xi_2 a + h^{-\beta}b(t)a \nonumber\\ + h^{2\beta} D_1D_2 a + \mathcal{O}(h^{\beta})). \end{align} We use our results on the asymptotic expansion $$v_h = e^{ix_2\xi_2 h^{2\beta}} \sum_{j \ge 0}  a_j(t) h^{\beta}$$ and we find \begin{equation}v_h = e^{ix_2\xi_2 h^{\alpha}}  e^{-i \int b(t)dt / \xi_2 h ^{\beta}} \sum_{j \ge 0} \phi_j(x) h^{j \beta}, \  \phi \in C_0^{\infty}(\mathbb{R} \times \mathbb{R}^n), \ \phi(0)=1. \end{equation} We use cut off functions $\phi \in C_0^{\infty}$ for $x$ so $a_j(t,x) =  e^{-i \int_{0}^t b(t)dt/\xi_2  h^{\beta}} \phi_j(x)$ and we write this as \begin{equation}P(h)v_h \sim e^{ix_2\xi_2 h^{2\beta}} h^{1+\beta} \sum_{j \ge 0} c_j h^{j \beta}.
\end{equation} We have \begin{align}c_j \sim \xi_2 D_1a_j + h^{-\beta}b(t) a_j + S_j(h) \nonumber\ \sim e^{-i \int_{0}^t b(t)dt /\xi_2 h ^{\beta}} (\xi_2D_1 \phi_j(x) + R_j(h)) = 0. \end{align} The terms $S_j(h)$ all have the exponential, so we can factor it out. The $$R_j(h) \in(h^{1+j_{1|2}\beta})\mathcal{O}(h^{\lambda+ \mu -1 + (\kappa- (j_{1|2}+1)\mu-j_{1|2})\beta }), \ j=1 $$ is the type of term that is left and it is just derivations of $\phi_k$ uniformly bounded depending only on $\phi_k$ for $k<j$.
The term $h^{2\beta}D_1D_2a$ will be entered in the second transport equation in the expansion. The system looks like $$(0) \quad \xi_2 D_1  a_0 +  h^{ -\beta}b(t)a_0=0$$ $$(1) \quad \xi_2 D_1h^{\beta}a_1 + h^{-\beta}b(t) h^{\beta}a_1 = h^{2 \beta}D_2(h^{-\beta}b(t)a_1/ \xi_2) $$ $$(2)\quad \ldots $$ $$(j)\quad \xi_2 D_1h^{j \beta}a_{j+1} + h^{-\beta}b(t) h^{j \beta}a_{j+1}= - S_na_j$$ $$\Leftrightarrow e^{-i \int b(t)dt /\xi_2 h ^{\beta}} (\xi_2D_1 \phi_j(x) + R_n(h)) = 0.$$
This is as before, and now to get it right, we also have to use a cut-off in the places where the unscaled coordinate is to get \begin{equation} (N)[h](|\text{exp}(-i \int b(t)dt /h^{\beta})| < C_N h^{N}, a_j = \mathcal{O}(h^{N}); N \in \mathbb{N}, 0<h \leq 1). \end{equation} Instead of $u(h,b)$, we have $v_h(b)$, and now we convert to $u(h,b)$ using the lemma. As the statement below is true for any $N$, we can replace $N$ by $N + (n(2\beta) +\beta)/2=N + (2n+1)\beta/2$. For the proof to be complete, we construct $u(h)$ with $||u(h)|| = 1$ in the $L^2$-norm. If we put  $u(h)=v_h/ ||v_h||$ we get $||u(h)|| =1$, and $$||P(h) u(h)|| \le c_1 h^{N + (2n+1)\beta/2} \ / || v_h||.$$  Now we use the lower estimate from Lemma 2.6 for $v_h$  together with the semiclassical limit \begin{equation}||P(h)u(h)|| \le (c_1/c_2) h^{N} = C h^{N}. \end{equation} \begin{align} \Vert R(h) \Vert  \ge  \frac{\Vert u(h) \Vert}{\Vert P(h)u(h) \Vert} \ge \frac{1}{C h^{N}}\to \infty, \  h \rightarrow 0. \end{align} We can now write the complete \emph{protocol} of the statement and the conditions for the resolvent norm in the \emph{transversal case} with the \emph{$\beta-$condition} \begin{align} (u(h,b))[b][h][P(h)][\dot\zeta][\dot\xi_1, \dot\xi_2](||R(h,\zeta)||=  \frac{\Vert u(h,b) \Vert}{\Vert (P(h)-\zeta)u(h,b) \Vert_{L_2}} \rightarrow \infty; \\ 0 <(u(h,b))\in C^{\infty}, ||u(h,b)|| =1, \im{b(t)}=\beta(t) \mapsto \pm t, h \rightarrow 0, ; \\ P(h)=h^2D_1D_2+hB(x,hD_x), \zeta=0, \textbf{.}p=dp=0 \land \textbf{.}\xi_1=\xi_2=0).\end{align} \end{proof}

\section{Factorization annihilating the $\beta$-condition}
To prepare for the next article, we shall now give an example annihilating the $\beta$-condition due to factorization. This serves as a type of conjugation, allowing us to ignore lower-order terms.
We start by considering operators that have a derivative after the subprincipal symbol, which in this case means that we can factorize
\begin{align} P(h)=h^2D_1D_2+hB(x,hD_x)hD_2;\ (x=x_1 | x_2) ; B(x,hD_x)hD_2 \in \Psi^2 \\ P(h)=h^2(D_1+ B(x,hD_x))D_2=h^2P_1P_2. \end{align} If we take $B(x,hD_x) \in \Psi^0$ for example $B=-ix$ we get \begin{equation} h^2P=h^2(D_1-ix)D_2=h^2D_2A_- .\end{equation} Here we meet the creation operator $A_+=(D_1 + ix_1)$ used in particle physics [12]. Together with annihilation $A_-=(D_1 - ix_1)$ we have \begin{equation} A_+=A_-^*. \end{equation} We note that \begin{equation} v(x)=e^{-(x_1^2+x^2_2)/2} \in S(\mathbb{R}^2) \end{equation}
is an eigenmode for $z=0$  as \begin{align} e^{(x_1^2+x_2^2)/2}Pv=-(e^{(x_1^2+x_2^2)/2}(\partial_{x_2}(\partial_{x_1}(e^{-(x_1^2+x_2^2)/2)})+ x_1\partial_{x_2}(e^{ (x_1^2+x_2^2)/2)}))\\=-(x_1x_2-x_1x_2)=0. \end{align}
This means that the resolvent $R_h(0)=(P_h)^{-1}$  does not exist as $z=0$ is an eigenvalue, and we are not able to create quasimodes for $P(h)v=0$ for $z=0$. We now construct the transport equations to solve. We do not need the earlier parameters because there are no remainder terms here, and we shall approach $z=0$ by letting $h \to 0$. \begin{equation} v_h=\text{exp}(x;h)\phi_{\Sigma j}(x;h) = e^{ix_2/ h}  e^{- x_1^2/2} \sum_{j \ge 0}   \phi_j(x_1,x_2) h^{j}. \end{equation} We choose $\zeta \in |z|=1$, and then we use the operator $P-h\zeta$ and let $h$ go to zero. In this way, we will be able to see how the resolvent norm changes as we approach the eigenvalue $z=0$. The $h$-quantification is here (with conjugation $\text{exp}^{-1}(t,x;h)$, and $(t,x)=(x_1,x_2)$ and $A_t^-=A(t)-)$ \begin{align}
\text{exp}^{-1}(hD_2(A_t^-)\text{exp}\phi_{\Sigma j}-h\zeta \phi_{\Sigma j}
\end{align}
We calculate the derivatives \begin{align} \text{exp}^{-1}(hD_2(A_t^-)(\text{exp}\phi_{\Sigma j})=\text{exp}^{-1}hD_2(A_t^-(\text{exp})\phi_{\Sigma j}+\text{exp}A_t^-\phi_{\Sigma j})\\
\text{exp}^{-1}D_2(\text{exp} A_t^-\phi_{\Sigma j})); \quad [A_t^-(\text{exp})=0]\\=\text{exp}^{-1}(D_2(\text{exp}) A_t^-\phi_{\Sigma j})+(\text{exp})D_2(A_t^-\phi_{\Sigma j}))\\ =\frac{i}{h} A_t^-\phi_{\Sigma j}+A_t^-D_2\phi_{\Sigma j} \end{align}
Finally we have \begin{equation} \sum_{j \ge 0}(i A_t^-\phi_j h^{j}+A_t^-D_2\phi_j h^{j+1}- \zeta \phi_j h^{j+2} ) \end{equation} or \begin{equation} \sum_{j \ge 0}i(D_1-it)\phi_j h^{j}+(D_1-it)D_2\phi_j h^{j+1}- \zeta \phi_j h^{j+2} ) \end{equation}
The first transport equation is $$A_-(x_1)\phi_1=0 ; \quad \phi_1= -e^{x_1^2/2}$$ as  $A_-(x_1)$ kills such terms. The second equation is zero for $\phi_1=e^{-(x_1^2+x^2)/2} \in S(\mathbb{R}^2)$ which we saw before. This leaves us with $\zeta \phi_j h^{j+2}$ which does not give any quasimodes, no $\beta$ -condition.
In [2], where we look into more generalized problems, we will come back to the factorization of the $\beta$-condition.
\addcontentsline{toc}{chapter}{Bibliography}

\appendix

\section{Preliminaries}
Here, we define some properties of the semiclassical and spectral environments in which we are working. The smooth manifold $\Sigma$ of $T^*X$, where $X$ is an open set in $\mathbb{R}^n$ and $T^*X$ is the cotangent space, is called $\emph{involutive}$ \footnote{This we can use if we like to apply the preparation theorem by Malgrange, we then get $f(x,t)=c(t^2+a_1t+a_0),$ which we can factorize taking $a_0=0$, to $ct(t+a_1).$} if it has the following  property:
For all smooth functions $u,v$ on $T^*X$, $$\{u,v\}=\sum_1^N \partial_{\xi_j}u \partial_{x_j}v - \partial_{x_j}u \partial_{\xi_j}v=0.$$
We shall consider the non radial involutive manifold $\Sigma$ with the neighborhood $\Omega \ni(x_0, \xi_0)$ where the normal form $p=p_1p_2 |_{\Omega} $ exists. We think of a given function \begin{equation} a:\mathbb{R}^n \times \mathbb{R}^n \longrightarrow \mathbb{C} \quad   \text{defined by} \quad a(x, \xi)\in C^\infty, \end{equation} where $x$ is position and $\xi$ is momentum, as a classical observable on phase space, and we call this function a $\emph{symbol}$. The total symbol $a(x, \xi)$ is often called \emph{classical} if it is a sum of terms, and then the first term is the principal symbol, $p(x,\xi)$, and the next term is the subprincipal symbol, here called $b(x,\xi)$. These two leading terms are of particular interest because they determine much of the behavior of the quantized (next paragraph) operator and have other special features, such as invariance under coordinate changes, which is very useful. Here, more complicated symbols, not only polynomials, are allowed, for example, the symbol $\frac{1}{\xi^2+1}$ which is quantized as the operator $(\Delta+1)^{-1}$. Then we call the operator \emph{pseudodifferential}. We must then impose a limit on the derivative of the symbol to make the integral (below) converge. This leads to different symbol classes; here, we use only the symbol class $S^n$ and the operator class $\Psi^n$, since we use simpler normal-form operators.
To the symbol $a(x,\xi)$ we associate a quantum observable or a pseudodifferential operator $P(x,hD)$, acting on functions $u=u(x)$ \begin{equation} P(x,hD_x)u(x) = \frac{1}{(2\pi h)^n} \iint e^{\frac{i}{h}\langle x-y,\xi\rangle}a(x,\xi)u(y)dyd\xi$$ $$= \frac{1}{(2\pi h)^n} \int e^{\frac{i}{h} x \xi} a(x,\xi) \hat{u}(\xi) d\xi\end{equation} where $h \in (0,1]$ and $D_x=\frac{\partial}{i \partial x}$. This we call the semiclassical quantization. In the case where we have $a(x,\xi)=x^{\alpha}\xi^{\alpha}$ the quantization is simple: we get operators of the type $P(x,hD_x)u= x^{\alpha}(hD_x)^{\alpha}u$, the association is $(x_j,\xi_j) \mapsto (x_j, hD_j)$ and in general the small parameter $h=1/|\xi| \in \mathbb{R}^+.$ For more on this, see Zworski [12].
We define some properties we need: The closure of the range is \begin{equation} \Sigma (P(h))= \Sigma(p) = \overline{\text{Ran}} (p) \end{equation} where $\text{Ran}(p)$ is the image of the principal symbol $p=p(x, \xi)$.
Instead of the characteristic sets $\Sigma_1(p) = \{(x, \xi):p(x, \xi) = 0; dp \not = 0\}$ and $\Sigma_2(p) = \{(x,\xi):p(x,\xi) = 0; dp  = 0\}$ for classical operators, we here look at level sets \begin{equation} S=p^{-1}(z) = \{(x_0, \xi_0): p(x_0, \xi_0) = z\} \end{equation} for $z \in \Sigma (P(h)).$ The stationary points are \begin{equation} \Sigma_2(P(h)) = \{d_{\xi}p(x_0, \xi_0) = 0\} \end{equation} and we define the $\emph{stationary point level set}, \Sigma_2(\zeta) =  p^{-1}(z) \cap \Sigma_2(P(h))$ as \begin{equation} [x_0] [\xi_0](p(x_0, \xi_0)= \zeta,d_{\xi}p(x_0, \xi_0) = 0,\zeta \in \Sigma (P(h))). \end{equation}
A symplectic \footnote {From Greek: $\pi \lambda \epsilon \kappa \tau \omega$ to knit and $\sigma \upsilon \mu$ which means to include, together the words could mean \emph{entangle}.} space is modeled after classical mechanics where we follow an object in space over time $(t)$ for motion $x(t)$ and momentum $\xi(t)=mv$ with respect to speed$ (v)$ and mass $(m).$
In this space, there is a non-degenerate (non-characteristic) form $\omega= dx \land d\xi$ which can be integrated $\int \omega$ to show an area. The size of this area is invariant; it can be distorted, but it cannot shrink or get larger.
By a symplectic change of coordinates, we mean canonical transformations that keep the qualities of the operator intact. For example, the Hamilton vector flow $(x(t),\xi(t))= \text{exp}$ $(tH_p)$ is symplectic, which means that it is invariant, not depending on any particular system of coordinates. The Poisson bracket $\{u,v\}$ and the subprincipal symbol on $\sigma_{\text{sub}}(P(h))\arrowvert_{\Omega} = b(x, \xi) $  are also invariant, for more of this see [6].
\section{The Spectrum and the Psedospectrum }
We now turn to our introduction of pseudospectrum and the spectrum of a classical operator $P$ in $L^2$. This is quite general, aimed at readers who are not so familiar with this notion.
Let $H$ be a complex Hilbert space. In $P(h)$, the family of operators depending on $h$, we fix $h_0 \in(0,1]$ and define $P_h:=P(h_0).$ \begin{definition} Here, $ T: H \supset D(T) \to H$ is a closed, densely defined operator: $\overline {D(T)} = H$, and we write $T \in \mathscr{C}(X)$ for this operator. It has the property that every sequence $$(u_k)[Tu_k](({u_k}\to u,{Tu_k}\to v) \land (u \in D(T), Tu=v); u \in H. )$$ A bounded operator $ E \in \mathscr{B}(X)$ satisfy the expression $$ (x)[k](|E(x)| \le k |x|) \ \land \ E\in C^0 \ \land \  E \ \text{is uniformly} \ C^0.$$ \end{definition} See Kato [7], which we follow here, for an expository introduction to operator theory. \begin{remark} The boundedness of a linear operator is not the same as, for example, the mapping $f:\mathbb{R} \rightarrow \mathbb{R}$. The operator $T: X \rightarrow X,$ defined by $T(x)=x$ is bounded as an operator but not in the sense of a bounded function. When  $h\rightarrow 0$, the symbol $\xi_1 \xi_2$ is bounded as we have $h \sim 1/\xi $. This is one of the main ideas in semiclassical analysis. We will not encounter unlimited derivatives more than $hD_1$, which is unlimited because of a factorization of the expression where it belongs. \end{remark} \begin{definition} \begin{align} [\zeta][\mathcal{R}](\mathcal{R}(\zeta)=(P_h-\zeta)^{-1},\mathcal{R}\in C^0, \mathcal{R}\in \mathscr{B}(X),\zeta \in \mathbb{C}) \end{align} is called the resolvent of $P_h$. (We use Greek $\zeta$ for those values when the resolvent exists. The number $\zeta$ in \begin{equation} [\zeta](\mathcal{R}(\zeta)=(P_h-\zeta)^{-1};\zeta \in \mathbb{C}) \end{equation} is called a \emph{regular value} for $\mathcal{R}$. \end{definition} We define the spectrum and the pseudospectrum in the standard way used, for example, in [10], and use $\sigma_{s}$ to denote the spectrum, and index this letter further to denote other forms of this notion (and $\lnot [\cdot]:=$ does not exist). \begin{definition}The spectrum for $P_h$, denoted $\sigma_{s}(P_h),$ is defined by \begin{equation} (P_h)[z](\lnot[(P_h-z)^{-1}]; z \in \mathbb{C}) \end{equation} The resolvent set $\rho(P)$ are all the regular values \begin{equation} (P)[\zeta]( [(P_h-\zeta)^{-1}], \ \zeta \in \mathbb{C}) \end{equation} and it is thus the complement of the spectrum $\rho(P_h)=(\sigma_{s}(P_h))^c  .$
\end{definition} \begin{remark} Other definitions of the pseudospectrum has been used e.g. $(\Lambda(p))$ which is the closure of the set below and $m\in T^*\mathbb{R}^n$(phase space)
\begin{equation} \Lambda(p):=p(\{p,\bar{p}\}(m)\not=0).
\end{equation} Here, $$\{p,\bar{p}\}= \sum \partial_{\xi_j}p ; \partial_{x_j}\bar{p} - \partial_{x_j}p ; \partial_{\xi_j}\bar{p}$$ is the Poisson bracket. Since the principal symbol has zero derivatives, $dp=0$, this does not work here; the bracket is zero. Different \emph{twist} conditions can also be found in TRE[10] such as $$\im(\partial_x a) / \partial_{\xi}a)=C_0>0 \Leftrightarrow \im(\partial_x a- C_0  \partial_{\xi}a)>0.$$ This derivative concerns the total symbol $a=a(x,\xi)$, so it is not automatically zero. For example, the operator $h^2D^2+ix^2$ has full symbol $a= \xi^2+ix^2$ and twist condition $x/\xi$ which would not be defined for just $p=\xi^2$. In the same book, we can also find the theory of winding number, which gives a geometric answer to the question of existence. However, it is stated that the results (Theorem 11.2, page 105, in [10]) only apply in one dimension. \end{remark} \begin{remark} Here we reduce the resolvent operator $R(h,\zeta)$ to the form $R(h)= P(h)^{-1}$ by subtraction. $R(h)$ is thus just the inverse of the operator $P(h)$. This is possible because we can, from the start, study the operator $Q(h) = (P(h) + \zeta)$.  When we look at $Q(h)-\zeta=((P(h)+\zeta)-\zeta))=P(h)$ the result will be the same with our without the constant $\zeta$, which is just a translation, f.x. in $(x^2 \sim x^2+a)$ or $(D_x(x^2)= D_x(x^2+a))$.
\end{remark} \begin{definition} We let $P_h \in \mathscr{C}(X)$ and $\epsilon$ > 0 and we say that the $\epsilon$-pseudospectrum  $\sigma_{\epsilon ps}(P_h)$ is the set of $\zeta \in \mathbb{C}$ in the mapping
$$\sigma_{\epsilon ps}:\Vert \mathcal{R}(\zeta) \Vert \longrightarrow \mathbb{R}^+$$
if we have $$ (1) \qquad (\Vert(P_h - \zeta)^{-1} \Vert > \epsilon^{-1} \   \lor   \  \Vert P_h^{-1} \Vert > \epsilon^{-1}). $$ For the function $u$ with $\Vert u \Vert =1$  the following are equivalent conditions
$$ (2) \qquad( \Vert (P_h-\zeta)u \Vert < \epsilon \   \lor   \  \Vert P_hu \Vert < \epsilon ).$$
Another view of this notion we find  in
$$ (3) \qquad [\dot E] (z \in \sigma_{s}(P_h+E),  E \in \mathscr{B}(X), \Vert E \Vert < \epsilon).$$ \end{definition}
The form on the right in (2) is of the type used in our proofs (but we, of course, use the semiclassical type $|| P(h)u(h)||_{L^2}$). The standard definition is used when performing numerical calculations.
In (2)  $\zeta$ is an $\epsilon$-pseudo eigenvalue of $P_h$ and $\upsilon$ is the corresponding $\epsilon$-pseudo eigenvector or pseudo-/quasimode which we will work with.
\begin{proposition} Let the conditions be as above. If $ \Vert P_hu \Vert < \epsilon$ for some $ u $  with  $\Vert u \Vert =1$, then $ \Vert P_h^{-1} \Vert > \epsilon^{-1}$ so we have $(2)\Rightarrow (1)$. \end{proposition}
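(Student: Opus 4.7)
The plan is to unwind the definition of the operator norm $\Vert P_h^{-1}\Vert$ and feed it a single well-chosen test vector, namely $v := P_h u$. Since we are told that $\Vert P_h u \Vert < \epsilon$ for some $u$ with $\Vert u \Vert = 1$, this $v$ records exactly the quantity we want to compare against $\epsilon^{-1}$.

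First I would handle the domain issue: the statement tacitly presumes that $P_h^{-1}$ exists as a bounded operator, since otherwise the convention of Remark 2.11 sets $\Vert P_h^{-1}\Vert = \infty > \epsilon^{-1}$ and there is nothing to prove. So I may assume $0 \notin \sigma_s(P_h)$, and in particular $P_h$ is injective. Then $v = P_h u$ is nonzero, because $v = 0$ would force $u = P_h^{-1}(0) = 0$, contradicting $\Vert u \Vert = 1$.

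Second, I would insert $v$ into the supremum defining $\Vert P_h^{-1}\Vert$. Since $u = P_h^{-1} v$, we obtain
\[
\Vert P_h^{-1} \Vert \;=\; \sup_{w \neq 0} \frac{\Vert P_h^{-1} w \Vert}{\Vert w \Vert} \;\geq\; \frac{\Vert P_h^{-1} v \Vert}{\Vert v \Vert} \;=\; \frac{\Vert u \Vert}{\Vert P_h u \Vert} \;>\; \frac{1}{\epsilon} \;=\; \epsilon^{-1},
\]
using $\Vert u \Vert = 1$ in the numerator and the hypothesis $\Vert P_h u \Vert < \epsilon$ in the denominator. The strict inequality is preserved because we are comparing a supremum against the value attained at one particular test vector.

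There is no genuinely hard step here; the argument is almost tautological once one picks the right test vector. The only subtlety worth flagging is the separation between the bounded-resolvent regime and the case in which $0$ is a genuine spectral value, which is handled by the $\Vert P_h^{-1}\Vert = \infty$ convention invoked above. An entirely analogous argument, with $P_h - \zeta$ replacing $P_h$ and $(P_h-\zeta)^{-1}$ replacing $P_h^{-1}$, delivers the $\zeta \neq 0$ half of the implication $(2)\Rightarrow(1)$ stated in Definition~B.5.
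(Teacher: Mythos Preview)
Your argument is correct and follows essentially the same route as the paper: both pick the test vector $P_h u$ (the paper normalises it as $P_h u = \delta v$ with $\Vert v\Vert = 1$ and $\delta < \epsilon$, you leave it unnormalised) and read off $\Vert P_h^{-1}\Vert \geq \Vert u\Vert / \Vert P_h u\Vert > \epsilon^{-1}$. Your explicit handling of the non-invertible case via the $\Vert P_h^{-1}\Vert = \infty$ convention is a small addition not spelled out in the paper's proof.
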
 \begin{proof}Let $u$ be a quasimode for the equation $P_h u \sim 0$ so that $\Vert P_hu \Vert < \epsilon$. Then, for some positive numbers $\delta < \epsilon$, we have $$ P_hu = \delta v $$ with $u$ and $v$ both having norm one. We get, if $P_h$ is invertible, \begin{equation}P_h^{-1}v = \delta^{-1} u,  \Vert R_h\Vert =  \Vert P_h^{-1} \Vert \Vert v \Vert > \Vert  \delta^{-1} u \Vert = \vert \delta^{-1}\vert \Vert v \Vert = \delta^{-1}> \epsilon^{-1}.\end{equation} \end{proof} Now, if we look at the third definition, we find that the number $z$ is in the $\epsilon$-pseudospectrum if it belongs to the spectrum of some perturbed operator $P_h+E$. If we want to compute the eigenvalues numerically, the starting point is a discretization of the operator. Here we can get round-off errors and perturbations of the initial operator. When we compute, the algorithms used to calculate the eigenvalues determine the eigenvalues of a perturbed operator. The result is that the spectrum can be unstable; the pseudospectrum is, in fact, the spectrum of a small perturbation of the operator. \begin{proposition} Let the conditions be as above. If $ \Vert P_hu \Vert < \epsilon$ for some $ u $  with  $\Vert u \Vert =1$   then $z \in \sigma_{s}(P_h+E) \text{ for some }  E \in \mathscr{B}(X) \text{ with } \Vert E \Vert < \epsilon$ so we have $(2)\Rightarrow (3)$. \end{proposition}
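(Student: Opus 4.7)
The plan is to exhibit $z$ as a genuine eigenvalue of an explicit bounded perturbation $P_h + E$ via a rank-one construction. The hypothesis supplies a unit vector $u$ with $P_h u$ small (in the more general form of condition (2), a unit vector $u$ with $(P_h-\zeta)u$ small), and the task is to absorb this small defect into a bounded operator $E$ of controlled operator norm.

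Concretely, I would set $w := P_h u$ and define the rank-one operator $E : H \to H$ by $E f := -\langle f, u\rangle\, w$ for $f \in H$. Then $E$ is bounded on all of $H$, with $\Vert E\Vert = \Vert w\Vert \, \Vert u\Vert = \Vert P_h u\Vert < \epsilon$, using $\Vert u\Vert = 1$. Since $E \in \mathscr{B}(X)$, the perturbed operator $P_h + E$ has the same domain $D(P_h)$ and remains closed (a closed operator plus a bounded operator on the same dense domain is closed; recorded in Kato [7]). A direct computation gives
\begin{equation}
(P_h + E)u \;=\; P_h u - \langle u,u\rangle\, w \;=\; w - w \;=\; 0,
\end{equation}
so $u$ is a genuine eigenvector of $P_h + E$ for the eigenvalue $0$, and hence $0 \in \sigma_s(P_h + E)$, which is condition (3) with $z=0$. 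The general form $\Vert(P_h-\zeta)u\Vert<\epsilon$ is handled identically by setting $w := (P_h-\zeta)u$, in which case the same computation yields $(P_h + E - \zeta)u = 0$ and therefore $\zeta \in \sigma_s(P_h + E)$.

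There is no real obstacle here: this is the easy implication among the three pseudospectral conditions, and the only subtlety is the innocuous closedness/domain check just mentioned. The substantive content is rather what this implication says in combination with the preceding sections: the quasimodes constructed in the proof of Proposition 3.2, which satisfy $\Vert P(h) u(h,b)\Vert \le C_N h^N$ with $\Vert u(h,b)\Vert = 1$, now certify for every $N$ that $\zeta$ is a true eigenvalue of some perturbation $P(h) + E$ with $\Vert E\Vert = \mathcal{O}(h^N)$, so the spectrum of $P(h)$ is arbitrarily unstable near $\zeta$ as $h \to 0$.
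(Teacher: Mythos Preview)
Your proof is correct and follows essentially the same strategy as the paper: construct a bounded perturbation $E$ with $\Vert E\Vert<\epsilon$ so that $(P_h+E)u=0$, placing $0$ in the spectrum. Your explicit rank-one operator $Ef=-\langle f,u\rangle\,P_hu$ is a cleaner formalization of the paper's somewhat informal choice ``use $-\delta v$ as a bounded operator $E$'', and your added remarks on closedness and on the general $\zeta$ case are accurate refinements.
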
 \begin{proof} We start as above, so let $u$ be a quasimode for the equation $P_h u \sim 0$ so that $\Vert P_hu \Vert < \epsilon$. Then, for some positive numbers $\delta < \epsilon$, we have $$ P_hu = \delta v u $$ with $u$ and $v$ both having norm one. This gives $P_hu - \delta vu=0$ and we may use $-\delta v$ as a bounded operator $E$ to write $P_hu+Eu=(P_h+E)u=0$. This means that $P_h+E$ is not invertible for $\zeta=0$, so  it is now in the spectrum. \end{proof} \begin{remark} The definition of semiclassical injectivity pseudospectrum can also be used the other way around as a $\emph{a priori estimate}$ \begin{equation} ||u(h)|| = 1 \text{and} \Vert P(h)u \Vert > Ch^{N}. \end{equation} This means we have no quasimodes to expect. If we consider the one-dimensional harmonic oscillator $P(h)=h^2 D_x +x^2$ we find, with $\langle x,x \rangle= |x|^2,$ $$\Vert P(h)u \Vert^2=\langle P(h)u,u \rangle= \langle h^2D_1^2u,u\rangle +\langle x_1^2,u\rangle=|h D_1u|^2+ |xu|^2 \ge 2|h D_1u| |xu| $$ $$ \ge   |2 \im\langle hD_u,xu\rangle |= |\langle xu, hD_1u \rangle- \langle hD_1u,xu \rangle | \quad (2 \im z =z- \bar z) $$ $$=|\langle  hD_1xu,u \rangle- \langle xhD_1u,u \rangle | \quad (\langle Tx,x\rangle= \langle x,T^*x \rangle)$$ $$=\langle (hD_1x-xhD_1)u,u\rangle $$ $$= |\langle [hD_1,x]u,u \rangle| \quad ([u,v]=uv-vu)$$ $$|u|^2; \quad (hD_1xu-xhD_1u)=hu/i,$$ so that in $\Vert P(h)u \Vert > Ch^{N}$ we get $C=N=1.$ \end{remark}
\section{Terminology and Notations}
We use here some unorthodox (and orthodox) notations, for example, the quantifiers we denote as done in older books in logic, as WR(Principia Mathematica)[12] $(x)$ for all $x$. Inspired by this, we have taken $[x]$ as “there exists” $x$ (this symbol does remind us of $\exists $) and $[\dot x]$ “there exists a unique” $x$. The idea is to avoid symbols that might steal our attention; for example, the use of \% is avoided in text; it is spelled out as ‘percent’ in literature, following an old writing codex used by journalists and writers.
\begin{example} Another example of this is $\exists !$, which is a mix of an unknown letter $\exists$ and the known sign !. This is taken from a mathematical text.
$$(\exists!x: P(x)) \Leftrightarrow \exists x:(P(x) \land \forall y:(P(y) \Rightarrow (y = x)))$$
This we write: $$([\dot x]P(x):= [x](P(x) \land (y)(P(y)) \land (y=x)).$$ Besides smoother symbols, it is also clear what the definiendum, to the left, is, and we note that the definition is conservative; it uses nothing more than what is known (definiens) to the right. The definition uses a double conjunct, which is easier to catch at a glance, giving the definition more structure. \end{example} Parentheses of the kind we use here are more accustomed to the eye. If there are too many, we can use dots “$\textbf{.}$”, used solely in WR, but we use them together with parentheses and only in an obvious way. \footnote{There is also the so-called Polish notation that has no parentheses, but it has not gained entry among mathematicians due to its being apart from the usual way of writing, although this prefix notation  has great advantages and in simple cases are obvious as 5(3+4) $\mapsto (\times+345$), working from the inside out.}
Quantifiers ($(x), [y],$ and $[\dot x])$, statements $(S_n)$ and conditions $(c_n)$ are ordered as follows \begin{align}(x)(y)\ldots[x][y]\ldots[\dot x][\dot y]\ldots(S_1,S_2, \ldots; c_1,c_2 \ldots ;\ \text{bounded}\ x,y, \ldots ; \text{free}\ x,y, \ldots)
\end{align} and (;) are used to divide information when needed to help the reading.  Variables $(x,y,\ldots)$ are called \emph{bounded} if they are in the quantifiers; others are called “free”. \begin{example} We connect to the continuity  of  the operator-valued function  $(h,\zeta)\mapsto (P-\zeta)^{-1}$ which we study here, or more correctly, the norm of this function.
Point wise continuity in this case  $$(\delta)(\zeta)[\epsilon][\dot\zeta_0](|\zeta-\zeta_{0}|<\delta \Rightarrow |\mathcal{R}(\zeta)-\mathcal{R}(\zeta_0))|<\epsilon;\   \delta>0,\epsilon>0;\ \delta,\epsilon \in \mathbb{R}^+, \zeta, \zeta_0\in \mathbb{C})$$ and for uniformly continuity $$(\delta)(\zeta)(\omega)[\epsilon](|\zeta-\omega|<\delta \Rightarrow |\mathcal{R}(\zeta)-\mathcal{R}(\omega))|<\epsilon;\ \delta>0,\epsilon>0; \delta,\epsilon \in \mathbb{R}^+, \zeta, \omega\in \mathbb{C})$$ \end{example} The conditions or statements in the parentheses, which substitute for “such that”, are all in a conjunctive state, symbolized by the comma (,); if not, we use connectives  $\land$ (and) and $\lor$ (or) between them.
Of the Greek letters, we use $\alpha, \beta$ and $\gamma$ as parameters in the system of transport equations. They are positive numbers less than 1, and they will fulfill
$\alpha+\beta +\gamma=1.$ The letters $\alpha$ and $\beta$ are also used for the complex symbol $b=\alpha(x,\xi) + i\beta(x,\xi)$. What is often clear from the context, otherwise we use $\alpha(x,\xi)$ or $i\beta(x,\xi)$. We use $\lambda$ and $\mu$ to denote powers of derivatives as in $h^{\lambda+\mu(1-\gamma) }D_1^{\lambda}hD_j^{\mu}$, and as usually $hD_x=\frac{1}{i}\partial_x.$ The letter $0<h \ll 1$ denotes the semiclassical parameter that goes to zero in the semiclassical limit. Often we also write $0<h \leq 1$ as we can use rescaling to the case $h=1.$ The letter $\sigma$ is used to denote different forms of spectrum and pseudospectrum, but is also used to indicate the symbol of an operator, as in $\sigma(P(h)$, where $P(h)$ is a semiclassical operator.
We observe that denotation is a relation $\mathcal{R}: a(a_1,a_2,\ldots) \ (R) \ b(b_1,b_2,\ldots)$, not always a function, $f$, so that we can have $a_1 \mapsto (b_1,b_2,\ldots)$, but we have $f \subset \mathcal{R}.$
For the coordinates we use $(t,x)=(x_1,x’)$ and $\tau,\xi)=(\xi_1,\xi’).$

\end{document}